\DeclareFontFamily{U}{wncy}{}
\DeclareFontShape{U}{wncy}{m}{n}{
<5>wncyr5
<6>wncyr6
<7>wncyr7
<8>wncyr8
<9>wncyr9
<10>wncyr10
<11>wncyr10
<12>wncyr6
<14>wncyr7
<17>wncyr8
<20>wncyr10
<25>wncyr10}{}
\DeclareMathAlphabet{\cyr}{U}{wncy}{m}{n}
\newcommand{\mtilde}{{\mathchoice
		{\widetilde{m}}
		{\widetilde{m}}
		{\rlap{$\scriptscriptstyle{m}$}\vphantom{\raise0pt\hbox{$m$}}\smash{\lower2.5pt\hbox{$\scriptscriptstyle\widetilde{\phantom{\scriptscriptstyle{m}}}$}}}
		{\rlap{$\scriptscriptstyle{m}$}\vphantom{\raise.2pt\hbox{$m$}}\smash{\lower2.05pt\hbox{$\scriptscriptstyle\widetilde{\phantom{\scriptscriptstyle{m}}}$}}}}}
\newcommand{\Mtilde}{{\mathchoice
		{\rlap{$M$}\mkern1mu\smash[b]{\lower.5pt\hbox{$\widetilde{\phantom{M}}$}}\mkern-1mu}
		{\rlap{$M$}\mkern1mu\smash[b]{\lower.5pt\hbox{$\widetilde{\phantom{M}}$}}\mkern-1mu}
		{\rlap{$\scriptstyle{M}$}\mkern1mu\smash[b]{\lower.5pt\hbox{$\widetilde{\phantom{\scriptstyle{M}}}$}}\mkern-1mu}
		{\widetilde{M}}}}
\newcommand\kbar{{\overline{k}}}
\newcommand\Xbar{{\overline{X}}}
\newcommand\Zbar{{\overline{Z}}}
\newcommand\Vbar{{\overline{V}}}
\newcommand\Mhat{{\widehat{M}}}
\newcommand\zz{\mathbf{z}}
\newcommand\A{\mathbf{A}}
\newcommand\ZZ{\mathbb{Z}}
\newcommand\QQ{\mathbb{Q}}
\newcommand\GG{\mathbb{G}}
\newcommand\GmZ{\GG_{\mathrm{m}}}
\newcommand\T{\mathcal{T}}
\newcommand{\sOint}{{\mathcal O}}
\newcommand{\sE}{{\mathscr E}}
\renewcommand{\P}{{\mathbf P}}
\newcommand{\sD}{{\mathscr D}}
\newcommand{\sX}{{\mathscr X}}
\newcommand{\sW}{{\mathscr W}}
\newcommand{\sU}{{\mathscr U}}
\newcommand{\Frob}{\mathrm{Fr}}
\newcommand{\Hom}{{\mathrm{Hom}}}
\DeclareMathOperator\Pic{Pic}
\DeclareMathOperator\Div{Div}
\DeclareMathOperator\divi{div}
\DeclareMathOperator\Gal{Gal}
\DeclareMathOperator\Br{Br}
\DeclareMathOperator\inv{inv}
\DeclareMathOperator\Cor{Cor}
\DeclareMathOperator\ord{ord}
\newtheorem{defi}{\rm{\textbf{Definition}}}[section]
\newtheorem{theorem}[defi]{Theorem}
\newtheorem{lemma}[defi]{Lemma}
\newtheorem{prop}[defi]{Proposition}
\newtheorem{cor}[defi]{Corollary}
\theoremstyle{definition}
\newtheorem*{ack}{Acknowledgements}
\newtheorem{conjecture}[defi]{Conjecture}
\newtheorem*{terminology}{Terminology}
\newtheorem{remark}[defi]{Remark}
\newtheorem*{hypHH}{Hypothesis~$(\mathrm{H}_1)$}
\numberwithin{equation}{section}
\begin{document}

\title[On the fibration method for rational points]
{On the fibration method for rational points}

\author{Dasheng Wei}

\address{Academy of Mathematics and System Science, CAS, Beijing 100190,
  P.\ R.\ China \emph{and} School of mathematical Sciences, University of  CAS, Beijing
  100049, P.R.China}

\email{dshwei@amss.ac.cn}

\date{October 14, 2019}

\begin{abstract}
We study rational points on rationally connected varieties under an assumption of strong approximation for a "simple" variety or under Schinzel's hypothesis. We also get some unconditional results. 
\end{abstract}

\subjclass[2010]{14G05 (11D57, 14F22)}

% 14G05: Algebraic geometry->Arithmetic problems. Diophantine geometry->Rational points
%
% 11D57: Number theory->Diophantine equations->Multiplicative and norm form equations
%
% 14F22: Algebraic geometry->(Co)homology theory->Brauer groups of schemes

\maketitle

%\tableofcontents

\section{Introduction}

Let~$X$ denote a smooth proper algebraic variety over
a number field~$k$.
It is customary to embed the set~$X(k)$ of rational points of~$X$
diagonally into the space of adelic points~$X(\A_k)$.
In 1970, Manin
(\cite{Ma71},\cite{Ma86}) showed that an obstruction based on the
Brauer group of varieties, now referred to as the Brauer--Manin
obstruction, can often explain failures of the Hasse principle and weak approximation.
Further work (see \cite{wit18} for a survey) has shown that for some
classes of rationally connected varieties the Brauer--Manin obstruction is the
only obstruction to the Hasse principle and weak approximation.

Assume that there exists a morphism $f : X \to \mathbb P^1_{k}$ with rationally 
connected geometrical generic fiber.
Assume that  the Brauer-Manin obstruction controls Hasse principle and 
weak approximation for rational points on  the smooth fibers. Does the 
same hold for the total space $X$? This question has been extensively 
studied.
%Let $X$ be a smooth proper and geometrically irreducible
%variety over a number field $k$,  and let $f : X \to \Bbb P_k^1$
%be a dominant morphism with rationally connected
%geometric generic fiber.
%Weak approximation on $X$  has been studied extensively. 
There are two approaches to study this problem. The first approach is to study weak approximation for its universal torsors by  descent theory developed in \cite{ctsandescent2}, see for example \cite{CTSSD}, \cite{SD99}, \cite{CTSa} by  geometric methods;  see \cite{heathbrownskorobogatov}, \cite{Jo13} by the circle method; see \cite{BHB}, \cite{derenthalsmeetswei}, \cite{irving} by the sieve method; and see \cite {hsw}, \cite{bms}, \cite{browningmatthiesen} by  additive combinatorics. 

Another approach is the fibration method which can be traced back to Hasse's proof of the local-global principle of quadratic forms. Under Schinzel's hypothesis, weak approximation was first studied
by Colliot-Th\'el\`ene and Sansuc in \cite{ctsansucschinzel}. Such a conditional result (Under Schinzel's hypothesis) was largely extended in \cite{ctsd94}, \cite {ctsksd98}, \cite{wittlnm} \cite{weioneq} and so on.

 Harpaz and Wittenberg (see \cite[Theorem 9.17]{HW}) further improved the fibration method  such that most of the above mentioned results can be covered. They proposed a conjecture (Conjecture 9.1) in \cite{HW} which implies that a large class of rationally connected varieties satisfy weak approximation with the Brauer--Manin obstruction.  

\bigskip
Let $k$ be a number field, let $P_1(t),\cdots, P_n(t)$ be pairwise distinct irreducible polynomials  in $k[t]$. Let $k_i=k[t]/(P_i(t))$ and $a_i$ the class of $t$ in $k_i$, finite field extensions $L_i/k_i$  and $b_i\in k_i^*$. Let $F_i$ be the singular locus of $R_{L_i/k}(\Bbb A^1_{L_i})\setminus R_{L_i/k}(\Bbb G_{m,L_i})$, this
is a codimension $2$ closed subset of the affine space $R_{L_i/k}(\Bbb A^1_{L_i})$. Let $W$ be the closed subvariety of $\mathbb A^2_k\setminus \{(0,0)\}\times \prod_{i=1}^n (R_{L_i/k}(\mathbb A^1_{L_i})\setminus F_i)$ with coordinates $(\lambda,\mu,{\bf z}_1,\cdots,{\bf z}_n)$ defined by the system of equations
\begin{equation}\label{equ:conj}
\begin{cases}
b_1(\lambda-a_1\mu)=N_{L_1/k_1}({\bf z}_1)\\
\cdots\cdots\\
b_n(\lambda-a_n \mu)=N_{L_n/k_n}({\bf z}_{n}),
\end{cases}
\end{equation}
where $N_{L_i/k_i}$ is the norm form of the extension $L_i/k_i$.
We call such $W$ to be \emph{a variety associated to  the pairs $(P_1(t),L_1), \cdots, (P_n(t),L_n)$}. Obviously, there is a natural projection $g: W\to \mathbb P^1_k$ by sending $(\lambda,\mu,{\bf z}_1,\cdots,{\bf z}_n)$ to $[\lambda: \mu]$.

Varieties closely related to~$W$ (in fact, partial compactifications of~$W$) first appeared in \cite[Proposition 2.6.3]{ctsandescent2}, and 
in~\cite[\textsection3.3]{skorodescent} named \emph{vertical torsors}, see also~\cite[p.~391]{ctskodescent} and \cite[\textsection4.4]{skobook}.
By \cite[Proposition 1.2]{ct15}, we have $\bar k[W]^\times =\bar k^\times $ and $\Pic (\overline W)=0$, hence $\Br_1(W)=\Br_0(W)$.

The following conjecture was first mentioned in \cite[\S 9]{HW} and \cite[\S 3.4]{wit18}, and it implies Harpaz and Wittenberg's Conjecture 9.1 (see \cite[Corollary 9.10]{HW}).
\begin{conjecture} \label{conjecture:1} 
	The variety $W$ satisfies strong  approximation for integral points off any place $v_0$ of $k$.
\end{conjecture}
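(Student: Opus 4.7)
The plan is to prove the conjecture via a fibration argument using $g: W \to \mathbb{P}^1_k$, combined with approximation for the fibers. Given an integral adelic point $({\bf w}_v)_{v \neq v_0} \in W(\A_k^{v_0})$ to be approximated at a finite set $S$ of places with $v_0 \notin S$, the projection $g$ yields an adelic point $([\lambda_v:\mu_v])_v$ of $\mathbb{P}^1_k$. Over the open set $U := \mathbb{P}^1_k \setminus \{[a_1:1],\ldots,[a_n:1]\}$ the fibers of $g$ are principal homogeneous spaces under the multi-norm torus $T := \prod_{i=1}^n R^1_{L_i/k_i}(\mathbb{G}_{m,L_i})$; the removal of the codimension-$2$ singular loci $F_i$ from the definition of $W$ is designed to keep the fibers over the degenerate points $[a_j:1]$ smooth and geometrically integral as well.

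I would proceed in two stages. First, I would apply the strong-approximation analog of the Harpaz--Wittenberg fibration theorem \cite[Theorem 9.17]{HW} to produce $[\lambda_0:\mu_0] \in U(k)$ that (i) is close to $[\lambda_v:\mu_v]$ at every $v \in S$, (ii) is integral at all $v \neq v_0$, and (iii) is such that the fiber $W_{[\lambda_0:\mu_0]}$ carries an integral adelic point close to $({\bf w}_v)_{v \in S}$. The vertical Brauer classes of $g$ factor through the algebraic Brauer groups of the fibers, which are cut out by the characters of $T$, and the equality $\Br_1(W) = \Br_0(W)$ recalled from \cite{ct15} ensures that no non-constant vertical class survives on $W$, so no algebraic Brauer--Manin obstruction arises on the base. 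Second, I would use strong approximation for the fiber $W_{[\lambda_0:\mu_0]}$ itself: as a torsor under the torus $T$, it satisfies strong approximation modulo a Brauer--Manin obstruction, and one argues that the adelic point produced in the first stage is orthogonal to $\Br(W_{[\lambda_0:\mu_0]})/\Br_0$ by pulling Brauer classes back to $W$ and exploiting the compatibilities imposed by ambient integrality at all $v \neq v_0$.

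The main obstacle will be the first stage. Because strong approximation is \emph{off} $v_0$, we have no freedom to perturb $[\lambda_v:\mu_v]$ at $v_0$, yet $[\lambda_0:\mu_0]$ must simultaneously approximate at $S$ and yield fibers that are locally solvable at every finite place, including $v_0$ where no slack remains. The standard tool to overcome this inflexibility is Schinzel's hypothesis, used to arrange that each linear form $\lambda_0 - a_i \mu_0$ has a controlled prime factorization, so that each norm equation reduces to a finite list of local solvability conditions. Unconditional replacements (the sieve method, or additive-combinatorial inputs along the lines of \cite{hsw}, \cite{browningmatthiesen}) work in restricted settings, and an induction on the number $n$ of equations, absorbing one pair $(P_i(t), L_i)$ at a time into a smaller associated variety, provides a natural scaffolding on which these inputs can be combined; the interplay between this base-level arithmetic input and the fibre-level torsor theory forms the heart of the matter.
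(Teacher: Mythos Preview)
The statement you are attempting to prove is a \emph{conjecture}: the paper does not prove it, and indeed uses it only as a hypothesis in the main fibration theorems. The two cases in which it is known (linear $P_i$ over $\mathbb{Q}$; or $\sum\deg P_i\le 3$ over $\mathbb{Q}$ with at least one $P_i$ linear) rely on deep analytic results of Browning--Matthiesen and Browning--Schindler. So there is no ``paper's own proof'' to compare against; your proposal is an attempt to settle an open problem.

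That attempt has a genuine circularity. Your first stage applies a Harpaz--Wittenberg--type fibration argument to $g:W\to\mathbb{P}^1_k$. But the non-split fibres of $g$ lie exactly over the closed points $m_i$ defined by $P_i$, and over $m_i$ the equation $N_{L_i/k_i}(\mathbf z_i)=0$ forces the fibre to split only after base change to $L_i$. Hence the ``associated variety'' one must feed into the fibration machine for $g$ is (up to a codimension-$2$ modification) $W$ itself, with the very same data $(P_i,L_i)$. Invoking \cite[Theorem~9.17]{HW} here presupposes the conjecture you are trying to prove. Your own remark that Schinzel's hypothesis is ``the standard tool to overcome this inflexibility'' is exactly the point: Hypothesis~$(\mathrm H_1)$ is the missing, currently unavailable, arithmetic input, and the induction on $n$ you propose does not manufacture it, since dropping one equation still leaves a system whose base-level difficulty is of the same nature.

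The second stage is also not as clean as stated. The generic fibre of $g$ is a torsor under the multinorm torus $T=\prod_i R^1_{L_i/k_i}\mathbb{G}_m$, and such torsors satisfy strong approximation off $v_0$ only \emph{with} Brauer--Manin obstruction; the group $\Br_1(W_c)/\Br_0(W_c)$ is typically nonzero. You write that orthogonality follows ``by pulling Brauer classes back to $W$'', but the relevant classes live in $\Br(g^{-1}(U))$, not in $\Br(W)$, and Harari's formal lemma would let you orthogonalise only at the cost of modifying the adelic point at finitely many auxiliary places --- harmless for weak approximation, but for strong approximation off the single place $v_0$ those modifications must themselves be corrected, which brings you straight back to the first-stage problem. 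In short, both stages bottom out at the same unresolved arithmetic input, and the argument as sketched does not close.
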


If $k = \Bbb Q$, Conjecture \ref{conjecture:1} holds for each of the following two cases:
\begin{enumerate}[(1)]	
	\item  $\deg(m_i) = 1$ for all $i \in \{1, . . . , n\}$, $v_0$ is a finite place of $\Bbb Q$;
	
	\item  $\sum \deg(m_i) \leq 3$ and $\deg(m_i)=1$ for at least one $i \in \{1, . . . , n\}$.
\end{enumerate}	
The case (1) is due to Matthiesen by \cite[Theorem 1.3]{matthiesen}  and its remark; it uses additive combinatorics and builds on the work of Browning and Matthiesen \cite{browningmatthiesen}. The case (2) is
a recent result of Browning and Schindler \cite{bs}, which  builds on the work of Browning and Heath-Brown \cite{BHB}.

A scheme of finite type over a field is \emph{split} if it contains a geometrically irreducible component of multiplicity $1$.   Recall here that the \emph{rank} of a fibration $f:X\to \Bbb P_k^1$ is defined to be 
the sum of degrees of  closed points of $\Bbb P_k^1$ above which the fiber of $f$ is not split.
The definition of split fibers can be found in work of  Skorobogatov \cite{skorodescent}, which is where the notion was originally  introduced to the subject.

Let~$X$ be a smooth, proper and geometrically integral variety over a number field~$k$
and let
$f:X \to \mathbb P^1_k$
be a dominant morphism with rational connected geometrical generic fiber. 
Let $U_0\subset \mathbb P_k^1$ be the dense open subset over which the fiber of $f$ is split and $\infty \in U_0$. Let $\Psi=\mathbb P_k^1\setminus U_0$. Let $P_1(t),\cdots,P_n(t)\in k[t]$ denote the irreducible monic polynomials which vanish at the points of $\Psi$. For $i\in \{1,\cdots,n\}$,  let $k_i=k[t]/(P_i(t))$,  fix $D_i$ to be a component over $k$ of multiplicity $1$ over the point corresponding to  $P_i(t)$, and let $L_{i}$  be the algebraic closure of $k_i$ in the function field $k(D_{i})$.
\smallskip

Now we state the main results of this paper (see Theorem \ref{cor:ratpointsRC} and \ref{thm:cyclic}).
\begin{theorem} \label{maintheorem}  Let~$X$ be a smooth, proper and geometrically integral variety over a number field~$k$
	and let
	$f:X \to \mathbb P^1_k$
	be a dominant morphism with rationally connected geometrical generic fiber.  Let $P_i(t) \text{ and } L_i$ be as above.
	
	Assume that: 
	
	$X_c(k)$ is dense in $X_c(\mathbf A_k)^{\Br(X_c)}$
	for every rational point~$c$ of a Hilbert subset of $\Bbb P^1_k$;
	
	 and one of the following two conditions holds:
	
	\begin{enumerate}[i)]		
	\item strong approximation off $v_0$ holds for any variety $W$ associated to the pairs $(P_1(t),L_1), \cdots, (P_n(t),L_n),$ where $v_0$ runs through  almost all finite places of $k$;
	
	\item the hypothesis $({\rm H_1})$  holds for $P_1(t),\cdots, P_n(t)$, and field extensions $L_i/k_i$ are cyclic for all $i$.  			
	\end{enumerate}
	  Then $X(k)$ is dense in~$X(\mathbf A_k)^{\Br(X)}$.
\end{theorem}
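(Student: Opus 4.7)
The plan is to adapt the fibration method of Harpaz--Wittenberg, using the variety $W$ (in case (i)) or Hypothesis $(\mathrm H_1)$ (in case (ii)) as the device that produces a good fiber. Starting from a finite set $S$ of places of $k$ and an adelic point $(P_v)_v\in X(\mathbf A_k)^{\Br(X)}$, the aim is to find a rational point $c\in \mathbb P^1(k)$ belonging to the prescribed Hilbert subset, such that the fiber $X_c$ carries an adelic point close to $(P_v)$ at the places of $S$ and orthogonal to $\Br(X_c)$; the fiber density hypothesis will then provide a $k$-rational point of $X$ approximating $(P_v)$ at $S$. I would first enlarge $S$ to a finite set $S'$ containing all archimedean places, places of bad reduction of a chosen integral model of $f$, and the places where the data $(P_i,L_i)$ misbehaves, and then apply Harari's formal lemma to modify $(P_v)_{v\notin S}$ so that the new adelic point is still orthogonal to $\Br(X)$ and is in addition orthogonal at every $v\in S'$ to the finite subgroup of vertical Brauer classes $f^*\Br(k(\mathbb P^1))\cap \Br(X)$. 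The orthogonality relations for these vertical classes, evaluated at $v\in S'$ and rewritten via class field theory, will translate into the existence, for each $v\in S'$, of a local lift $w_v\in W(k_v)$ of $c_v=f(P_v)\in \mathbb P^1(k_v)$.

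In case (i), strong approximation off a well-chosen place $v_0\notin S'$ for the variety $W$ will then yield a $k$-point $w\in W(k)$ approximating $w_v$ at each $v\in S'$; the additional freedom at $v_0$ will be used to arrange $c=g(w)\in \mathbb P^1(k)$ to lie in the prescribed Hilbert subset (Hilbertian conditions being imposable via congruence conditions at $v_0$ when $v_0$ runs through almost all finite places). In case (ii), Hypothesis $(\mathrm H_1)$ applied to the linear polynomials $b_i(\lambda - a_i\mu)$ in the parameters $(\lambda:\mu)$, combined with the cyclicity of the extensions $L_i/k_i$ and a Colliot-Th\'el\`ene--Sansuc--Schinzel style reciprocity argument, will produce the required $c$ directly, the global norm relations being verified using cyclic class field theory. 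In either case, $c$ will be $v$-adically close to $c_v$ for every $v\in S'$, so smoothness of $X$ and the implicit function theorem will lift the local base points to local points $Q_v\in X_c(k_v)$ close to $P_v$, assembling into an adelic point of $X_c$.

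The main obstacle will be to verify that this adelic point of $X_c$ is orthogonal to the whole of $\Br(X_c)$, and not merely to the vertical part inherited from $X$. Since Brauer classes on $X_c$ need not extend to $X$, the strategy will be to choose a sufficiently fine integral model of $f$ and to force the lifts $Q_v$ to lie in $X_c(\mathcal O_v)$ for $v\notin S'$, so that only a known finite set of unramified Brauer classes on $X_c$ can contribute to the obstruction; their contribution will then be absorbed either by the approximation slack at $v_0$ (in case (i)) or by the flexibility in choosing the prime values produced by Hypothesis $(\mathrm H_1)$ (in case (ii)). Once this orthogonality is established, the density assumption on $X_c$ for $c$ in the Hilbert subset will deliver the required $k$-rational point of $X$ approximating $(P_v)$ at $S$.
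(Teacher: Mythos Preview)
Your outline captures the overall shape of the fibration method, but it has a genuine gap at the decisive step: controlling the pairing of $(Q_v)_v$ with the \emph{full} group $\Br(X_c)$ at those places $v\notin S'$ for which $c\bmod v$ lands in a non-split fiber $m_i$. At such a place the generators $\beta\in B\subset\Br(f^{-1}(U'_0))$ that surject onto $\Br(X_c)/\Br(k)$ are \emph{ramified} along $D_i$, so making $Q_v$ integral does not force $\beta(Q_v)=0$; the invariant is $n_{w,i}\,\partial_{\beta,D_i}(\Frob_{\zeta_{w,i}})$ and depends on which component of the special fiber your lift comes from. A single place $v_0$ of ``approximation slack'' cannot absorb these contributions: there are several $\beta$'s and several $\Lambda_i$'s, and the usual auxiliary places $v'_1,\dots,v'_N$ (with geometric Chebotarev freedom) only help once one has already shown that the partial sum $\sigma_i=\sum_{v\in\Lambda_i\setminus\{v'_i\}} n_{w,i}\Frob_{\zeta_{w,i}}$ lies in the subgroup $H_i=\Gal(E_iK_i/k(D_i)K_i)$. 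Proving $\sigma_i\in H_i$ is exactly the heart of the matter, and your proposal does not supply a mechanism for it.

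The paper provides two distinct mechanisms. In case (i) it does \emph{not} produce local points on $W$ from vertical-Brauer orthogonality as you suggest; instead it applies open descent (\cite{weidescent},\cite{cdx}) to obtain an adelic point on a torsor $\mathcal T$ of type $\lambda$, whose restriction to $V$ is canonically $V\times_{\mathbb P^1}W$. This furnishes the adelic point on $W$ for free, and more importantly makes the pulled-back fibration $p':X\times_{\mathbb P^1}W\to W$ \emph{split} at every fiber (Lemma~\ref{lemma:split}). The claim $\sigma_i\in H_i$ for $i\le n$ is then proved by running Harari's argument on $p'$ with the auxiliary classes $\Cor_{L_i/k}(\chi,\mathbf z_i)\in\Br_1(W_0)$ rather than on $f$; the non-split fibers have effectively been resolved by passing to $W$. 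In case (ii) the paper reserves, for each $i\le n$, a packet $S_i$ of places at which $K_i/k$ splits completely and whose Frobenii in $\Gal(K_i/L_i)$ realize every element; the cyclicity of $L_i/k_i$ is used to show $[\Gal(L_i/k_i),\Gal(K_i/L_i)]=[\Gal(K_i/k_i),\Gal(K_i/k_i)]$, so that the uncontrolled Frobenius at the Schinzel prime $v_i$ can be cancelled by replacing $x'_{\mathfrak p_i}$ at one $\mathfrak p_i\in S_i$ by its Galois conjugate $\rho_i(x'_{\mathfrak p_i})$. Neither the descent step in (i) nor the conjugation trick in (ii) appears in your sketch, and without one of them the argument does not close.
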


%Under the assumption that the fibers above a Hilbert set of rational points satisfy weak
%approximation, the case ii) of Theorem \ref{maintheorem} has been studied in \cite{ctsksd98}. 
We may compare  
Theorem \ref{maintheorem} with Harpaz and Wittenberg's  fibration theorem (\cite[Theorem 9.17]{HW}). Firstly, there is a technical condition (9.9) in \cite[Theorem 9.17]{HW}, but our result does not need it. 
%which will hold under the assumption that $k$ is totally imaginary or that a %non-split fiber of~$f$ lies over a  rational point of~$\P^1_k$ (see %\cite[Remark 9.18 ii)]{HW}); 
Secondly, 
\cite[Theorem 9.17]{HW} is requiring that their Conjecture 9.1 holds for all $L_i$ running through finite extensions $L_i/k_i$, but the $L_i$ in our result are exactly the  algebraic closure of $k_i$ in $k(D_i)$.
In fact, if the unramified Brauer group of the generic fiber is nontrivial,  then the proof of \cite[Theorem 9.17]{HW} requires that Conjecture 9.1 holds for larger fields $L_i$ than defined in Theorem \ref{maintheorem} which depend on the unramified Brauer group of the generic fiber and on all non-split fibers, such $L_i$ are hard to be estimated. Therefore, \cite[Theorem 9.17]{HW} requires that Conjecture 9.1 holds for all finite extensions $L_i/k_i$ for simplicity. In general, \cite[Conjecture 9.1]{HW} is more difficult to be established if the $L_i$ are larger.  

%If there is the Brauer--Manin obstruction on smooth fibers of $f$, \cite[Theorem 9.17]{HW} is requiring that \cite[Conjecture 9.1]{HW} holds for $L_i$ running through all finite extensions $L_i/k_i$, see all unconditional results in \cite[\S 9.4]{HW} and \cite[Theorem 1.1]{bs}, which is much stricter than our $L_i$.

There are two approaches to prove \cite[Conjecture 9.1]{HW}. The first approach is by Schinzel's hypothesis under the assumption that all $L_i/k_i$ in \cite[Conjecture 9.1]{HW} are abelian (or slightly more general, \text{almost abelian}, see \cite[\S 9.2]{HW}).  There should be \emph{no} Brauer--Manin obstructions to weak approximation on smooth fibers of $f$ for \cite[Theorem 9.17]{HW} to be applicable (see \cite[Corollary 9.27]{HW}), which should be compared with the case $ii)$ in Theorem \ref{maintheorem}.   
Another approach is by  Conjecture \ref{conjecture:1}, see \cite[Corollary 9.10]{HW}; and \cite[Conjecture 9.1]{HW} can also imply a similar result with Conjecture \ref{conjecture:1} (see \cite[Corollary 5.6 ]{CWX19}).

 Combining analytic results $(1)$ and $(2)$ with Theorem \ref{maintheorem}, we may cover all unconditional results in \cite[\S 9.4]{HW} (e.g., Theorem 9.28, Corollary 9.29, Theorem 9.31)  and \cite[Theorem 1.1]{bs}, and we can get more unconditional results (see Theorem \ref{unconditional}) since the $L_i$ in Theorem \ref{maintheorem}  are exactly the algebraic closure of $k_i$ in $k(D_i)$ but that in \cite[Theorem 9.17]{HW} are required to run through all finite extensions $L_i/k_i$.

The proof of Theorem \ref{maintheorem} runs through the whole paper. In \S 2, we mainly prove the case $i)$. Our proof needs to combine descent theory (\cite{ctsandescent2}) with the fibration method in \cite{harariduke}.
For $X$ as in Theorem \ref{maintheorem}, let $(x_v)_v\in X(\A_k)^{\Br(X)}$, by a descent result (\cite{weidescent} or \cite{cdx}), there is a vertical torsor $\mathcal T$ of $X$, such that $(x_v)_v$ is the image of $(y_v)_v\in \mathcal T(\A_k)^{\Br_1(\mathcal T)}$.
% by the functority of Brauer--Manin pairing, $(y_v)_v$ is also orthogonal to the image of $\Br(X)$ in $\Br(\mathcal T)$. 
  By the local description of vertical torsors, let $V$ be a fixed open subset of $X$, then $\mathcal T_V:=\mathcal  T\times_X V$ is isomorphic to $V\times_{\mathbb P^1_k} W$. By the implicit function theorem, we may assume $(y_v)_v \in (V\times_{\mathbb P^1_k} W)(\A_k)$. 
Assuming that $W$ satisfies strong approximation, there is a rational point $c'$ of  $W$ which is very close to the image of $(y_v)_v$ in $W(\A_k)$. Let $c:=g(c')\in \mathbb P^1(k)$. By a similar argument as in \cite[Theorem 9.17]{HW}, the fiber $X_c$ has a local point $(x'_v)_v$.  By the original idea of \cite{harariduke,hararifleches}, we may easily control  values of the fiber's Brauer group at $(x'_v)_v$ when $c \text{ mod } v$ is in split fibers. The most difficult thing is how to control these  values when $c \text{ mod } v$ is in non-split fibers. In \cite[Theorem 9.17]{HW}, this is done by \cite[Conjecture 9.1]{HW} and the choice of a sufficiently large  $L_i$ in this conjecture. In our case, we observe  that $X_c$ is also the fiber over $c'$  of  the natural projection $V\times_{\mathbb P^1_k} W \to W$, and that $V\times_{\mathbb P^1_k} W\subset X\times_{\mathbb P^1_k} W$ and the natural fibration $X\times_{\mathbb P^1_k} W\to W$ is split (see Lemma \ref{lemma:split}). When $c \text{ mod }v$ is in non-split fibers, in fact $x'_v$ gives a point $z'_v$ in $V\times_{\mathbb P^1_k} W$, hence in $\mathcal T_V$ over $c'$. By a similar argument as in \cite{harariduke,hararifleches}, we easily control values of the fiber's Brauer group at $z'_v$, hence values at $x'_v$. If we furthermore suppose that the fibers of $f$ satisfies weak approximation with the Brauer--Manin obstruction, then we can find a rational point of $X$ very close to $(x_v)_v$.
 
%  However, in fact we do not understand the variety $X\times_{\mathbb P^1_k} W$ very well, for example,we do not know whether $(y_v)_v$ is contained in the Brauer--Manin set of $(X\times_{\mathbb P^1_k} W)(\A_k)$. Since $\mathcal T$ is birational equivalent to $X\times_{\mathbb P^1_k} W$, in the proof we use Harari's formal lemma on $\mathcal T$ and always restrict all local points on a small open subset of $ \mathcal T$ which is  of the form  $U \times_{\mathbb P^1_k} W$, where $U$ is an open subset of $X$. 
%
%However, the natural fibration $X\times_{\mathbb P^1_k} W\to W$ is split (see Lemma \ref{lemma:split}) and its fiber over a closed point is just a fiber of $f$.
%
%such that the fiber of $y$ contains a local adelic point which is orthogonal to the fiber's Brauer group
%
% by Harari's formal lemma and a similar discussion as in \cite{harariduke,hararifleches}, 
%
%By the local description of vertical torsors, the vertical torsor $\mathcal T$ is birationally equivalent to the fiber product $X\times_{\mathbb P^1_k} W$. However, the natural fibration $X\times_{\mathbb P^1_k} W\to W$ is split (see Lemma \ref{lemma:split}) and its fiber over a closed point is just a fiber of $f$.

In \S 3, we mainly prove the case $ii)$ of Theorem \ref{maintheorem}. If  Brauer groups of smooth fibers are trivial, this case is just \cite[Theorem 1.1]{ctsksd98}. In general, there is the Brauer--Manin obstrutcion to weak approximation on  smooth fibers.  By the fibration method as in \cite[Theorem 1.1]{ctsksd98}, we can find a rational point $t_0$ in $\mathbb P^1_k$ such that the fiber over $t_0$ contains local points. Therefore, the key task is to control the Brauer--Manin obstruction on this fiber. If $t_0\text{ mod }v $ is in split fibers, it is classical by Harari's original idea (\cite{harariduke,hararifleches}). Therefore, the most difficult case is  when $t_0\text{ mod } v $ is in non-split fibers. Let $m_i$ be a non-split closed point of $\mathbb  P^1_k$ with residue field $k_i=k(m_i)$, and let $L_i$ be the algebraic closure of $k_i$ in $k(D_i)$, where $D_i$ is the fixed irreducible component above $m_i$ as in Theorem \ref{maintheorem}. Let $v$ be a place of $k$ with $|v|$ large enough at which $L_i/k$ is completely split, then $D_i \text{ mod }v$  splits into many components. We may choose  $t_0$ such that $t_0 \text{ mod } v\in m_i \text{ mod }v$. Since $X_{t_0} \text{ mod } v$ is just $D_i \text{ mod }v $, smooth points on these components of $ D_i \text{ mod }v $ can be lifted to smooth points of $X$ over $t_0$ at which values of the fiber's Brauer group are different and related by the Galois conjugation in some sense. Basing on this observation, for every non-split fiber, we fix some finite places of $k$ at which $L_i/k$ is completely split; we choose $t_0$ in $\mathbb P^1_k$  which is very close to $m_i$ at these places. At these fixed places, we can modify the local points coming from the different components of $D_i \text{ mod } v$ so that the Brauer--Manin obstruction vanishes on the fiber.  This idea may also be applied to study the existence of integral points (e.g., \cite[Theorem 4.4]{CWX19}).

\bigskip

We  get some unconditional results (see Corollary \ref{cor:smallrank}, \ref{cor:smallrank2} and \ref{cor:smallrank3}). 
\begin{theorem} \label{unconditional}
	Let~$X$ be a smooth, proper, geometrically integral  variety over
	a number field~$k$
	and $f:X \to \mathbb P^1_k$
	be a dominant morphism
	with rationally connected geometric generic fiber.
	Assume that one of following condition holds:
	\begin{enumerate}[(1)]	
	\item $\mathrm{rank}(f)\leq 2$;
	
	\item  $\mathrm{rank}(f)\leq 3$, and every non-split fiber $X_m$ contains a multiplicity $1$ component $D_m$ such that the algebraic closure of $k(m)$ in the function field $k(D_m)$ is quadratic over $k(m)$;
	
	\item  $k=\Bbb Q$, and $\mathrm{rank}(f)\leq 3$, and every non-split fiber $X_m$ contains a multiplicity $1$ component $D_m$ such that the algebraic closure of  $k(m)$ in the function field $k(D_m)$ is  cyclic over $k(m)$.
	\end{enumerate}	
	If~$X_c(k)$ is dense in $X_c(\mathbf A_k)^{\Br(X_c)}$ for every
	rational point~$c$ of a Hilbert subset of~$\Bbb P^1_k$, then~$X(k)$ is dense in~$X(\mathbf A_k)^{\Br(X)}$. 
\end{theorem}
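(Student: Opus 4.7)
The plan is to deduce each of the three cases from Theorem~\ref{maintheorem}; since the Hilbert-set hypothesis on $X_c$ is assumed, it suffices to verify either condition i) or condition ii) of that theorem in each situation. The crucial feature making unconditional conclusions possible is that in Theorem~\ref{maintheorem} the fields $L_i$ are pinned down as the algebraic closure of $k_i$ in $k(D_i)$ rather than ranging over all finite extensions of $k_i$, so the analytic input has to be checked only for the specific varieties $W$ produced by the geometry of~$f$.

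For case (3), with $k=\mathbb{Q}$ and $\mathrm{rank}(f)\leq 3$, I would verify condition i) by invoking the analytic results recalled in the introduction: Matthiesen's theorem covers the sub-case in which every $\deg(m_i)=1$, and Browning--Schindler's theorem covers the sub-case in which $\sum_i\deg(m_i)\leq 3$ with at least one $\deg(m_i)=1$. The only remaining sub-case is that of a single non-split closed point of degree $2$ or $3$; here one switches to condition ii), combining the cyclicity of $L_i/k_i$ with the known unconditional instances of the Schinzel-type hypothesis $(\mathrm{H}_1)$ available for a single irreducible polynomial of low degree over $\mathbb{Q}$.

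For cases (1) and (2), over a general number field, the low rank bound allows an exhaustive case analysis according to the shape of the non-split locus. In case (2), the quadratic hypothesis makes every $L_i/k_i$ cyclic, so condition ii) applies as soon as $(\mathrm{H}_1)$ holds: the linear $P_i$ are handled by Dirichlet's theorem on primes in arithmetic progressions, and the quadratic $P_i$ by the corresponding known instance of $(\mathrm{H}_1)$. In case (1), where cyclicity is not assumed, one aims for condition i) directly: when the non-split locus consists of a single linear point, $W$ is essentially a Zariski open subset of a Weil restriction of affine space, for which strong approximation off a place is classical; when it consists of a single closed point of degree~$2$, a Weil restriction to the residue field $k(m_1)$ reduces the defining equation of $W$ to the same configuration over $k(m_1)$ with a rational non-split parameter.

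The main obstacle is case (1) in the configuration of two rational non-split points with possibly non-cyclic $L_i/k$, where neither the analytic results over~$\mathbb{Q}$ nor condition ii) is directly available; here one must exploit the very low dimension of $W$ and the particular form of the $L_i$ forced by the geometry to establish strong approximation by a direct descent argument, reducing $W$ to a principal homogeneous space under a low-dimensional torus whose strong approximation is classical.
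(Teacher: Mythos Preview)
Your treatment of case~(3) is essentially the paper's: when some non-split fiber lies over a rational point one invokes the analytic results of Matthiesen and Browning--Schindler (giving condition~i)), and when none does, the non-split locus is a single closed point of degree~$3$, so Heath-Brown--Moroz gives $(\mathrm{H}_1)$ and condition~ii) applies. One correction: you cannot dispose of a single degree~$2$ point via $(\mathrm{H}_1)$, since $(\mathrm{H}_1)$ for an irreducible quadratic over~$\mathbb{Q}$ is a notorious open problem (it would give infinitely many primes of the form $n^2+1$). Fortunately that sub-case has $\mathrm{rank}(f)=2$ and is absorbed by case~(1).

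Your treatment of case~(2), however, does not work. You propose to use condition~ii), but $(\mathrm{H}_1)$ is not known for irreducible polynomials of degree~$\geq 2$ over a general number field (and, as just noted, not even for degree~$2$ over~$\mathbb{Q}$); Dirichlet handles a \emph{single} linear polynomial, but already several linear polynomials over a general~$k$ would require an analogue of Green--Tao that is not available. The paper instead verifies condition~i). The point is that when every $L_i/k_i$ is quadratic and $\sum_i[k_i:k]=3$, the system $b_i(\lambda-a_i\mu)=N_{L_i/k_i}(\mathbf{z}_i)$ expresses the three independent linear forms $\lambda-a_i\mu$ (restricted to~$k$) as differences of two squares, so $W$ is the punctured affine cone over (an open subset of codimension~$\geq 2$ in) a smooth projective quadric of dimension~$4$. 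For such cones Lemma~\ref{sa:quadric} establishes strong approximation off~$v_0$ directly, with trivial Brauer obstruction since $\Br_1=\Br_0$ in dimension~$\geq 3$.

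For case~(1) your plan is more complicated than necessary, and the ``main obstacle'' you flag is not one. When $\mathrm{rank}(f)=2$, regardless of whether the non-split locus is two rational points or one quadratic point, the two linear forms $\lambda-a_i\mu$ (after restricting scalars) are independent in $\lambda,\mu$, so the defining equations of~$W$ simply \emph{solve} for $(\lambda,\mu)$ in terms of the $\mathbf{z}_i$. Hence $W$ is isomorphic to the complement, in an affine space $\prod_i R_{L_i/k}(\mathbb{A}^1_{L_i})$, of a closed subset of codimension~$\geq 2$ (namely the union of the $F_i$ and the locus where both norms vanish). Strong approximation off any place for such a variety is immediate from the affine-space case, with no descent or torus argument needed; this is how the paper handles it in Corollary~\ref{cor:smallrank}.
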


Under the assumption that the fibers above a Hilbert set of rational points satisfy weak
approximation, the case (1) and (2) were also proved in \cite[Theorem 2.2]{ctsksd98}, \cite[Theorem A,B]{ctskodescent};  the case (1) had also been dealt with in \cite[Theorem 9.31]{HW} under the assumption that $k$ is totally imaginary or that the  non-split fibers of~$f$ lie over rational points of~$\Bbb P^1_k$, they also mentioned that the technical assumption is superfluous in the footnote.

\begin{terminology}
	Notation and terminology are standard. Let $k$ be a number field, $\Omega_k$ the set of all places of $k$ and  $\infty_k$ the set of all archimedean places of $k$. Write $v<\infty$ for $v\in \Omega_k\setminus \infty_k$. Let $\mathcal O_k$ be the ring of integers of $k$ and $\mathcal O_S$ the $S$-integers of $k$ for a finite set $S$ of $\Omega$ containing $\infty_k$. For each $v\in \Omega$, the completion of $k$ at $v$ is denoted by $k_v$, the completion of $\mathcal O_k$ at $v$ by $\mathcal O_v$ and the residue field at $v$ by $k(v)$ for $v<\infty_k$. Let ${\mathbf A}_k$ be the adele ring of $k$.
	
  Let $Z$ be a smooth variety over $k$. Denote  $Z(\mathbf A_k)^B$ to be the set of all $(z_v)_v \in Z(\mathbf A_k)$ satisfying $\sum_{v \in \Omega_k} \inv_v(A(z_v))
  = 0$ for each $A$ in the subgroup $B$ of the Brauer group $\Br(Z) =
  H^2_\text{\'et}(Z,\GmZ)$ of $Z$, where the map $\inv_v : \Br(k_v)
  \to \QQ/\ZZ$ is the invariant map from local class field theory.
The subgroup $\Br_0(Z)$ of constant elements in the Brauer group is
  the image of the natural map $\Br(k) \to \Br(Z)$. The algebraic
  Brauer group $\Br_1(Z)$ is the kernel of the natural map $\Br(Z) \to
  \Br(\Zbar)$, where $\Zbar = Z \times_k \kbar$.
  
 We say that
  strong approximation holds for $Z$ off a (\emph{finite or infinite}) set $S$ of places of $k$ if
  the image of the set $Z(k)$ of rational points on $Z$ is dense in the space
  $Z(\A_k^S)$ of adelic points on $Z$ outside $S$. In particular, $Z$ satisfies strong approximation off $\Omega \setminus S$ for each finite subset $S$ of $\Omega$, which is equivalent to that $Z$ satisfies weak approximation. If $S_1\subset S_2$ be two subsets of $\Omega$, then strong approximation off $S_1$ holds for $Z$ implies strong approximation off $S_2$ holds for $Z$.
\end{terminology}

\begin{ack} This work was begun while the author participated
	in the program "Reinventing rational
	points" at IHP in 2019. The hospitality
	and financial support of the institute is gratefully acknowledged. The author would like to thank Prof. J.-L. Colliot-Th\'el\`ene for his many valuable  suggestions on the original version of this paper.
    The  work is supported by National Natural Science Foundation of China (Grant Nos. 11622111 and 11631009).
\end{ack}

\section{Under Conjecture \ref{conjecture:1} and some unconditional results}

Let $k$ be a number field,  $F$ a finite field extension of $k$, and denote $\Omega_{F/k}$ to be the set of finite places of $k$ at which $F/k$ is totally split.
%Let $X$ be a smooth, geometrically integral  variety over $k$, endowed with a morphism $f: X\rightarrow \mathbb P_k^1$. 
%Suppose that every fiber of $f$ contains an irreducible component of multiplicity 1.
%Let $U_0\subset \mathbb P_k^1$ be the dense open subset over which the fiber of $f$ is split and $\infty \in U_0$. Let $\Phi=\mathbb P_k^1\setminus U_0$. Let $P_1(t),\cdots,P_n(t)\in k[t]$ denote the irreducible monic polynomial which is vanish at the points of $\Phi$.
%Let $P_1(t),\cdots,P_n(t)\in k[t]$ denote the irreducible monic polynomials which are vanish at the points of $\Phi$. For $i\in \{1,\cdots,n\}$,  let $k_i=k[t]/(P_i(t))$,  let $D_i$ be a divisor over $k$ of multiplicity 1 over the point corresponding to  $P_i(t)$, and let $L_{i}$  be the algebraic closure of $k_i$ in the function field $k(D_{i})$. 

\begin{prop}\label{main:1}
Let $X$ be a smooth, geometrically integral  variety over $k$, endowed with a morphism $f: X\rightarrow \mathbb P_k^1$ with geometrically integral generic fiber. Assume that every fiber of $f$ contains an irreducible component of multiplicity 1 
%and that  the fibration $f$ acquires
%a  rational $\bar k$-section, 
and that $\bar k [X]^\times=\bar k^\times$. Let $U_0, \Psi, P_i(t), D_i,k_i,L_i$ and $k(D_i)$ be as in Theorem \ref{maintheorem}. Let $U'_0\subset U_0$ be an open subset, and $\infty \in U'_0$. Let $B\subset \Br(f^{-1}(U'_0))$ be a finite set.

Let $(x_v)_v\in X({\bf A}_k)$ be orthogonal to $B\cap \Br(X)+ \Br_1(X)$ with respect to the Brauer--Manin pairing.  Assume that:

 a)  strong approximation off $\Omega_{F/k}\setminus T$ holds for any variety $W$ associated to the pairs $(P_1(t),L_1), \cdots, (P_n(t),L_n)$, where  $F$ is a field extension of $k$ depending on $X$ and $B$, and $T$ runs through  all finite subsets of  $\Omega_{F/k}$.

Then there exists $c\in U'_0(k)$ and $(x'_v)_v\in X_c({\bf A}_k)$ such that $X_c$ is smooth and $(x'_v)_v$ is orthogonal to $B$ with respect to the Brauer--Manin pairing and is arbitrary close to $(x_v)_v$ in $X(\mathbf A_k)$.
\end{prop}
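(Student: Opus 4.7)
The plan is to implement the strategy outlined in the introduction: descend $(x_v)_v$ to a vertical torsor $\mathcal T$, project down to $W$, apply the hypothesized strong approximation on $W$ to pick a rational point $c'$, and then verify that the resulting fiber $X_c$, with $c=g(c')$, carries an adelic point close to $(x_v)_v$ and orthogonal to $B$.

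First I would invoke the descent theorem of \cite{weidescent} or \cite{cdx}. Since $(x_v)_v$ is orthogonal to $\Br_1(X)$ (and the relevant part of $B$), it lifts to an adelic point $(y_v)_v\in\mathcal T(\mathbf A_k)^{\Br_1(\mathcal T)}$ for a suitable vertical torsor $\mathcal T\to X$ whose type is dictated by the fibration $f$. The local description of vertical torsors (\cite[\S4.4]{skobook}) then provides a dense open $V\subset X$ together with an isomorphism $\mathcal T_V:=\mathcal T\times_X V\cong V\times_{\mathbb P^1_k}W$, where $W$ is a variety associated to the pairs $(P_i(t),L_i)$. Using the implicit function theorem at the archimedean and finitely many non-archimedean places, I may arrange $(y_v)_v\in (V\times_{\mathbb P^1_k}W)(\mathbf A_k)$, and projecting on the second factor I obtain an adelic point $(w_v)_v\in W(\mathbf A_k)$.

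Next I would fix a large finite set of places $S_0\supset\infty_k$ outside which $X$, $V$, $W$ and the elements of $B$ all have good reduction and the local invariants are ``generic''. A Harari-type formal lemma, in the spirit of \cite{harariduke,hararifleches}, lets me enlarge $S_0$ by a finite set $T\subset\Omega_{F/k}$, for a suitable finite extension $F/k$ determined by $X$ and $B$, so that any $(w'_v)_v$ agreeing with $(w_v)_v$ on $S_0\cup T$ will give rise to a local lift on $X$ whose Brauer pairing against $B$ is still controllable. By hypothesis (a), strong approximation for $W$ off $\Omega_{F/k}\setminus T$ then produces $c'\in W(k)$ arbitrarily close to $(w_v)_v$ at every place of $S_0\cup T$. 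Setting $c:=g(c')\in U_0'(k)$, the fiber $X_c$ is smooth, and the local points of $V\times_{\mathbb P^1_k}W$ above $c'$ lift (through $\mathcal T_V\to V\hookrightarrow X$) to local points $(x'_v)_v$ of $X_c$ close to $(x_v)_v$.

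The remaining task is to check that $(x'_v)_v$ is orthogonal to $B$. At $v\in S_0$ this follows from continuity and the orthogonality of $(x_v)_v$. At $v\notin S_0\cup T$ with $c\bmod v$ lying in a split fiber of $f$, the standard good-reduction argument shows that every $A\in B$ has trivial local invariant at $x'_v$. The delicate case, and the one I expect to be the main obstacle, is $v\in\Omega_{F/k}\setminus T$ for which $c\bmod v$ lies in a non-split fiber of $f$; this is where the new idea of the paper must be deployed. Here one observes that $x'_v$ also lies in $X\times_{\mathbb P^1_k}W$ over $c'\in W(k)$, and by Lemma~\ref{lemma:split} the projection $X\times_{\mathbb P^1_k}W\to W$ is split. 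Pulling $B$ back along $\mathcal T_V\hookrightarrow X\times_{\mathbb P^1_k}W$ and running the Harari-type computation relative to this new fibration over $W$ rather than over $\mathbb P^1_k$, the fact that $c'\bmod v$ lies in a split fiber of $X\times_{\mathbb P^1_k}W\to W$ kills the local invariants at $x'_v$. The hard part will be the bookkeeping: identifying the correct field $F$ (controlling the splitting field of the horizontal and residual components), matching the Brauer classes after descent and base change, and ensuring that the adjustments at $S_0\cup T$ used to approximate by $c'$ do not interfere with the formal-lemma argument at the remaining places.
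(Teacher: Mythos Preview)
Your outline captures the descent step and the use of strong approximation on~$W$ correctly, but the verification of orthogonality to~$B$ contains a genuine gap.

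The claim that at a place $v$ with $c\bmod v$ in a non-split fibre of~$f$, the splitness of the fibre of $X\times_{\mathbb P^1_k}W\to W$ over $c'\bmod v$ ``kills the local invariants at $x'_v$'' is false, and the same objection applies to your treatment of the split fibres in $U_0\setminus U'_0$. Splitness of a fibre does \emph{not} force $\inv_v\beta(x'_v)=0$: the elements of~$B$ live only in $\Br(f^{-1}(U'_0))$ and may have nontrivial residues $\partial_{\beta,D_i}\in H^1(k(D_i),\QQ/\ZZ)$ along every component $D_i$ over $m_i\in\mathbb P^1_k\setminus U'_0$, so by \cite[Corollaire~2.4.3]{harariduke} one has $\inv_v\beta(x'_v)=n_{w,i}\,\partial_{\beta,D_i}(\Frob_{\zeta_{w,i}})$, which is typically nonzero. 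What splitness gives is only that one may \emph{choose} a local point whose Frobenius on the \'etale cover $\sE_i\to\sD_i^0$ is any prescribed element of the relevant Galois group.

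What is missing from your sketch is precisely the mechanism that turns this freedom into orthogonality. In the paper's proof one introduces, \emph{before} applying the formal lemma, two auxiliary finite families of Brauer classes: $B'=\{\Cor_{L_i/k}(\chi,\mathbf z_i)\}\subset\Br_1(W_0)$ for $1\leq i\leq n$ and $B''=\{\Cor_{k_i/k}(\chi,t-a_i)\}\subset\Br(U'_0)$ for $n+1\leq i\leq N$, and applies Harari's formal lemma to the group generated by $p_V^*(B)$, $p'^*(B')$ and $p^*f^*(B'')$ on $\mathcal T_V$. One then reserves, for each $i$, a single place $v_i\notin S$ totally split in a field $K_i\supset L_i$ (the algebraic closure of $L_i$ in the splitting field $E_i$ of the residues of~$B$ along~$D_i$), and $F$ is the compositum of the $K_i$. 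After strong approximation on~$W$ produces $c'$, one constructs $x'_v$ for $v\in\Lambda_i\setminus\{v_i\}$ by lifting points $\zeta_{w,i}$ from $\sD_i^0$; the orthogonality to $B'$ (resp.\ $B''$) is exactly what forces the accumulated Frobenius $\sigma_i=\sum_{v\in\Lambda_i\setminus\{v_i\}} n_{w,i}\Frob_{\zeta_{w,i}}$ to lie in $H_i=\Gal(E_i/k(D_i)K_i)$ for $i\leq n$ (resp.\ $i>n$). Only then can one invoke geometric Chebotarev at $v_i$ to pick $\zeta_{w_i,i}$ with $\Frob_{\zeta_{w_i,i}}=-\sigma_i$, making $\sum_{v\in\Lambda_i}\inv_v\beta(x'_v)=0$ for every $\beta\in B$. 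The use of $B'\subset\Br_1(W_0)$, rather than classes pulled back from $\mathbb P^1_k$, is the new ingredient that handles the non-split fibres with the given $L_i$; your proposal does not introduce $B'$, $B''$, the correction places $v_i$, or the $\sigma_i\in H_i$ computation, and without them the argument does not close.
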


\begin{remark}
	 Let $F'$ be any finite extension of $k$ and $n$ any fixed positive integer. Note that $\Omega_{FF'/k}$ is an infinite subset of $\Omega_{F/k}$, then $\Omega_{FF'/k}\setminus T$ is also an infinite set. Therefore, $\Omega_{FF'/k}\setminus T$ contains infinitely many  subsets of $\Omega_{F'/k}$ of cardinality $n$.
	So, if  $W$  satisfies strong approximation  off $S$ where $S$ runs through almost all subsets of $\Omega_{F'/k}$ of cardinality $n$, then the condition ~a) in Proposition \ref{main:1} holds.   
\end{remark}

Let $V_0=U_0\setminus \infty$ and let $V=f^{-1}(V_0)\subset X$. Let $P$ be the $k$-group of multiplicative type dual to the $\Gamma_k$-module $\widehat P=\ker[\Pic(\overline X)\rightarrow \Pic(\overline V)]$. We have the exact sequence 
$$0\rightarrow \kbar[V]^\times /\kbar^\times \xrightarrow{div} \Div_{\overline X\setminus \overline V}(\overline X) \rightarrow \widehat P \rightarrow 0.$$

Let $\widehat M$ be the free $\Gamma_k$-module generated by all divisors $$\overline {D}_\infty,\overline{D}_{11},\cdots, \overline{D}_{1r_1},\cdots, \overline{D}_{n1},\cdots, \overline{D}_{nr_n},$$ 
where $\overline{D}_{ij} \subset \overline{D}_{i}$ and $1\leqslant j \leqslant r_i$.
Then $\widehat M$ is a permutation submodule of $\Div_{\overline X\setminus \overline V}(\overline X)$, its dual  $$M\cong \mathbb G_m \times R_{L_1/k_1}(\mathbb G_{m,L_1})\times\cdots\times R_{L_n/k_n}(\mathbb G_{m,L_n}).$$ We have another exact sequence
$$0\rightarrow \kbar[V_0]^\times /\kbar^\times \xrightarrow{\divi} \widehat M \rightarrow \widehat Q \rightarrow 0,$$ 
where $\widehat Q$ is the quotient of the morphism $\divi$. Obviously $\widehat Q$ is a finitely generated free $\mathbb Z$-module since all $D_i$ are of multiplicity 1, hence its dual $Q$ is a torus.  Let $\widehat R=\kbar[V_0]^\times /\kbar^\times$, $\widehat R$ is a permutation module and its dual $R$ is isomorphic to $\prod_{i=1}^{n}R_{k_i/k}(\mathbb G_{m,k_i})$. 
Then we have a commutative diagram 
\begin{equation}\label{dia:type}
\begin{CD}
0 @>>> \widehat R @>\divi{} >> \Mhat @>>> \widehat Q @>>> 0\\
@. @V \rho VV @V \eta VV @V\lambda VV \\
0 @>>> \kbar[V]^\times/\kbar^\times @>\divi{} >> \Div_{\Xbar\setminus
	\Vbar}(\Xbar) @>>> \widehat P @>>> 0,
\end{CD}
\end{equation}
where $\rho$ and $\eta$ are the natural morphism and $\lambda$ is induced by $\eta$. The morphism $\lambda$ induces an element in $\Hom_k(\widehat Q, \Pic(\Xbar))$, we also denote it by $\lambda$.

Since $\kbar [X]^\times =\kbar^\times$, by descent theory (\cite[Corollary~2.3.4]{ctsandescent2}, \cite[Proposition 4.4.1]{skobook}), the torsors of type $\lambda$ over
$X$ always exist. 
%if and only if the exact sequence of $\Gamma_k$-modules
%\begin{equation}\label{eq:exact}
%0 \to \kbar^\times \to \kbar[V]^\times \to \kbar[V]^\times/\kbar^\times \to 0
%\end{equation}
%is split. In particular, if $X({\bf A}_k)^{\cyr B(X)}$ is not empty, then 
%\begin{equation*}
%0 \to \kbar^\times \to \kbar(X)^\times \to \kbar(X)^\times/\kbar^\times \to 0
%\end{equation*}
%is split by \cite[Proposition 3.3.2]{MR89f:11082}, hence sequence (\ref{eq:exact}) is split.

%Consider the homomorphism $\divi :\kbar[U]^\times/\kbar^\times \to
%\Div_{\Xbar \setminus \Ubar}(\Xbar)$ that maps a function to its divisor. We
%have
%\begin{equation*}
%\divi([t-\eta]) = \sum_{j=1}^n D_{1,j}.
%\end{equation*}

The dual of the morphism $div: \widehat R \to \Mhat $ is then given by the morphism of
$k$-tori
\begin{equation*}
d: M \to R=\prod_{i=1}^{n}R_{k_i/k}(\mathbb G_{m,k_i}),\quad \zz \mapsto (z_\infty^{-1} N_{L_1/k_1}(\zz_1),\cdots,z_\infty^{-1} N_{L_n/k_n}(\zz_n)),
\end{equation*}
where $\zz=(z_\infty, \zz_1,\cdots \zz_n)\in M$.

We have an exact sequence of tori
\begin{equation*}
1 \to Q \to M \to R \to 1.
\end{equation*}
This makes $M$ into a $R$-torsor under the torus $Q$.

We now describe the map $V \to R$ induced by the splitting $\phi: \kbar[V]^\times/\kbar^\times \rightarrow \kbar[V]^\times$. Then $$\phi\circ \rho: \widehat R=\kbar[V_0]^\times/\kbar^\times\rightarrow \kbar[V]^\times, [t-a_i]\mapsto b_i(t-a_i),$$ where $b_i\in k_i$. The induced map $V \to R$ is induced
by
\begin{equation*}
V_0 \to R=\prod_{i=1}^{n}R_{k_i/k}(\mathbb G_{m,k_i}),\quad t \mapsto (b_1(t-a_1),\cdots,b_n(t-a_n)).
\end{equation*}

%and it is easy to see that the image is in $T$ using the equation of $X$ and
%the condition~(\ref{eq:splitting_condition}). Therefore, the image of $U$ in
%$T$ is isomorphic to the subvariety of $\AA^1_k \times T$ with coordinates
%$(t,\zz_1,\zz_2)$ defined by
%\begin{equation*}
%t-\eta = \rho \zz_1.
%\end{equation*}

%For $i\in\{1,\dots,n\}$, let~$F_i'$ denote the singular locus of
%$R_{L_i/k}(\mathbb A^1_{L_i}) \setminus R_{L_i/k}(\mathbf{G}_{\mathrm{m},{L_i}})$.
%This is a codimension~$2$ closed subset of the affine space $R_{L_i/k}(\mathbb A^1_{L_i})$.  Let $Y$ be the affine subvariety of $\mathbb A^1_k\times \prod_{i=1}^n R_{A_i/k}(\mathbb A^1_{L_i})\setminus F_i'$ with coordinates $(t,{\bf z}_1,\cdots,{\bf z}_n)$ defined by the systems of equations
%\begin{equation*}
%\begin{cases}
%b_1(t-a_1)=N_{L_1/k_1}({\bf z}_1)\\
%\cdots\\
%b_n(t-a_n)=N_{L_n/k_n}({\bf z}_{n}).
%\end{cases}
%\end{equation*}
%We can see that $Y$ is an open subset of $W$.

By \cite[Theorem~2.3.1, Corollary~2.3.4]{ctsandescent2}, any torsor
$\T_V$ over $V$ of type $\lambda$ is the pullback of a torsor $M$ from $R$ to $V$, hence  it is isomorphic to this variety $V\times_R M$. Since 
$$V\times_R M \cong V\times_{V_0} V_0\times_R M \cong V\times_{V_0} g^{-1}(V_0) \subset X\times_{\mathbb P^1_k} W,$$  where $W$ is the variety in Conjecture \ref{conjecture:1} and  $g: W\to \Bbb P^1_k, (\lambda,\mu)\mapsto [\lambda:\mu]$ is the natural projection. Let  $p': X\times_{\mathbb P^1_k}W \to W$ be the natural projection, then $\T_V=p'^{-1}(V_0)$. Therefore,  
we have the commutative diagram \begin{equation}\label{com:basic2}
\begin{CD}
\mathcal T_V=V\times_{V_0}  g^{-1}(V_0) @>p' >> g^{-1}(V_0)\\
@V p VV @V g VV  \\
V @>f>>  V_0.
\end{CD}
\end{equation}
We extend the commutative diagram (\ref{com:basic2}) to the following commutative  diagram 
\begin{equation}\label{com:basic}
\begin{CD}
X\times_{\mathbb P^1_k}W @>p' >> W\\
@V p VV @V g VV  \\
X @>f>> \mathbb P^1_k,
\end{CD}
\end{equation}
in which we denote morphisms by same notations for simplicity as in (\ref{com:basic2}).  

\bigskip
The following lemma is in fact \cite[Theorem 3.3.3]{skorodescent}, it is not directly used in the latter part. However, the original idea is inspired by combining this lemma with \cite[Theorem 4.2.1]{harariduke}.
\begin{lemma}\label{lemma:split}
	All fibers of the morphism $p': X\times_{\mathbb P^1_k}W \to W$ are split.
\end{lemma}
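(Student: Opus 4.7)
The fiber of $p'$ over $w\in W$ is, by the Cartesian square, the $k(w)$-scheme $X_{g(w)}\times_{k(g(w))}k(w)$, and I need to produce a geometrically integral component of multiplicity~$1$ in it. When $g(w)\in U_0$ the fiber $X_{g(w)}$ is already split over $k(g(w))$ by hypothesis, so any geometrically integral component of multiplicity~$1$ remains such after base change to $k(w)$, and this case is done.

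The substantive case is when $g(w)$ is one of the non-split closed points, say the point $m_i$ of $\mathbb{P}^1_k$ cut out by $P_i$, with residue field $k_i$. The map $g$ provides a canonical $k$-embedding $k_i\hookrightarrow k(w)$; identifying $k_i$ with its image, the equality $g(w)=m_i$ translates (in the chart $\mu\neq 0$) into $\lambda/\mu = a_i$ in $k(w)$. Projecting the $i$-th defining equation of $W$ onto the $k(w)$-factor of $k_i\otimes_k k(w)$ picked out by this embedding yields $N_{L_i/k_i}(\mathbf z_i)=0$ in $k(w)$, where $\mathbf z_i$ is now viewed as an element of the \'etale $k(w)$-algebra $A:=L_i\otimes_{k_i}k(w)$. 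Combining this with $\mathbf z_i\notin F_i$ and the description of $F_i$ as the singular locus of the norm-zero hypersurface in $R_{L_i/k}(\mathbb{A}^1_{L_i})$, I want to conclude that the $k(w)$-linear multiplication-by-$\mathbf z_i$ endomorphism of $A$ has corank exactly~$1$.

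Decomposing $A=\prod_j F_j$ into a product of finite field extensions of $k(w)$, this operator is block-diagonal and its kernel has dimension $\sum_{z_{i,j}=0}[F_j:k(w)]$. For the corank to be $1$, exactly one factor $F_{j_0}$ must coincide with $k(w)$ (with the corresponding coordinate of $\mathbf z_i$ vanishing); in particular $L_i$ embeds in $k(w)$ over $k_i$. Since $L_i$ is the algebraic closure of $k_i$ in $k(D_i)$, the irreducible components of $D_i\times_{k_i}k(w)$ correspond bijectively to the factors of $A$, and the component attached to $F_{j_0}=k(w)$ is geometrically integral as a $k(w)$-scheme; it inherits multiplicity~$1$ from $D_i\subset X_{m_i}$, yielding the desired split component of $p'^{-1}(w)$.

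I expect the main obstacle to be the rank/corank identification above: one has to unpack the definition of $F_i$ to see that it consists precisely of the points at which multiplication by the relevant coordinate drops rank by at least~$2$, so that $\mathbf z_i\notin F_i$ together with $N_{L_i/k_i}(\mathbf z_i)=0$ forces $\mathbf z_i$ to be a simple zero of the norm in the above sense. Once this is in place, the remainder is linear-algebraic bookkeeping with \'etale algebras and the standard description of the components of $D_i\times_{k_i}k(w)$ via the algebraic closure of $k_i$ in $k(D_i)$.
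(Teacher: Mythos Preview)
Your proof is correct and follows essentially the same approach as the paper's. Both arguments reduce to the key observation that, on the fiber $g^{-1}(m_i)$, the condition $\mathbf z_i\notin F_i$ together with the vanishing of $N_{L_i/k_i}(\mathbf z_i)$ forces $L_i$ to embed into the residue field over $k_i$; the paper phrases this globally---noting that $\overline{D}=\overline{g^{-1}(m_i)}$ is a \emph{disjoint} union of prime divisors, so that $L_i$ lies in the field of constants of each component of $D$ and hence in $k(y)$ for every $y\in D$---whereas you carry out the same computation point-wise via the \'etale algebra $A=L_i\otimes_{k_i}k(w)$ and the corank of multiplication by $\mathbf z_i$. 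One small point worth making explicit in your write-up: the condition $\mathbf z_i\notin F_i$ is a condition on $\mathbf z_i\in L_i\otimes_k k(w)$, and you need that it descends to the projection in $A$; this is immediate since the geometric coordinates of the projection form a subset of those of the full element, but it is the one place where your notation (reusing $\mathbf z_i$ for the projection) could obscure a step.
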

\begin{proof}
	Let $y$ be a geometrical point of $W$. Suppose $g(y)\in U_0$. The fiber of $g$ over $y$ is geometrically integral, and the fiber $$p'^{-1}(y)=f^{-1}(g(y))\times_{k(g(y))} k(y)$$ by the diagram (\ref{com:basic}). Since $f^{-1}(g(y))$ is split by the definition of $U_0$, the fiber $p'^{-1}(y)$ is split. If $g(y)\not \in U_0$, then $g(y)$ is a closed point. Without loss the generality, we may assume $g(y)$ be the point in $\mathbb {P}^1_k\setminus U_0$ corresponding to the irreducible polynomial $P_1(t)$. Let $D$ denote the divisor of $W$ defined by $P_1(t)=0$, then $y\in D$. By the definition of $W$, $\overline {D}$ is a disjoint union of the prime divisors, which implies that  $L_1\subset k[D]$, hence $L_1\subset k(y)$.  By the diagram (\ref{com:basic}), the fiber $$p'^{-1}(y)=f^{-1}(g(y))\times_{k(g(y))} k(y)=f^{-1}(g(y))\times_{k_1} k(y)\supset D_1\times_{k_1} k(y).$$
	Since $L_1$ is the algebraic  closure of $k_1$ in $k(D_1)$, the fiber of $p'$ over $y$ is split.
\end{proof}

 \bigskip
 \bigskip
\begin{proof}[Proof of Proposition \ref{main:1}]
%Let $m$ any closed point of $\Bbb P^1_k$, for each non-smooth or non-irreducible fiber $X_m$, choose a component $D_m$ of multiplicity one of $X_m$, and if $m \in m_i$ with $i\in \{1,\cdots,n\}$, we let $D_m=D_i$. Let $F_m$ be the closed set which is the unin of all the other components in $X_m$, let $G_m$ be the singular locus of $D_m$. Let $Y\subset X$ be the complement in $X$ of the union of the $F_m$'s and $G_m$'s over all closed points $P\in \Bbb P^1_k$. 

% Obviously $\Br_\lambda$ is a finite group since $S$ is a torus. 
Let $(x_v)_v\in X({\bf A}_k)$ be orthogonal to $B\cap \Br(X)+ \Br_1(X)$. 
Let $V=f^{-1}(V_0)$, and $B\subset \Br(V)$. By \cite[Theorem 1.7]{weidescent} or \cite[Corollary 4.3]{cdx}, there exists a $\mathcal T$ with $p:\mathcal{T}\rightarrow X$ which is a torsor of type $\lambda$  (see (\ref{dia:type})) such that $(x_v)_v$ is the image of a point $(y_v)_v\in \mathcal T({\bf A}_k)^{\Br_1(\mathcal T)}$.  Since $\mathcal T$ is a torsor over $X$ under the torus $Q$, all fibers of $p$ are geometrically integral, hence we have 
$$p^*(B\cap \Br(X))=p_V^*(B)\cap \Br(\mathcal T),$$ 
where $p_V$ is the restriction of $p$ on $V$.
Therefore $(y_v)_v$ is orthogonal to $p_V^*(B)\cap \Br(\mathcal T)$ by the functoriality of Brauer--Manin pairing, hence 
\begin{equation}\label{Har}
(y_v)_v\in \mathcal T({\bf A}_k)^{p_V^*(B)\cap \Br(\mathcal T)+\Br_1(\mathcal T)}.
\end{equation}

Let $\Psi=\{m_1,\cdots,m_n\}$, $\Psi'=U_0\setminus U'_0:=\{m_{n+1},\cdots,m_N\}$, and then $\mathbb P^1_k\setminus U'_0=\{m_1,\cdots, m_N\}$. Let $D_i$ just be the divisor over $m_i$ in our assumption for $1\leq i \leq n$.  For  $n+1 \leq i \leq  N$, let $P_i(t)$ be the irreducible monic polynomial which vanishes at $m_i$, we may choose  $D_i$ to be a geometrically  integral divisor of multiplicity $1$ over $k_i$ since the fiber of $f$ is split at $m_i$. 

For $1\leq i \leq N$,  let $k_i=k[t]/(P_i(t))$ and $a_i$ the class of $t$ in $k_i$;  and we may choose a finite abelian extension $E_i/k(D_i)$ such that the residue of any element of $B$ at $D_i$ vanishes in  $H^1(E_i,\QQ/\ZZ)$.  Let $K_i$ be the algebraic closure of $L_i$ in $E_i$, hence $K_i/L_i$ is a finite abelian extension. For $i\in\{n+1,\cdots, N\}$, since $f$ is split at $m_i$,  we have $L_i=k_i$, hence $K_i/k_i$ is a finite abelian extension. For each $i\in  \{1,\dots, N\}$,  we denote $G_i=\Gal(E_i/k(D_i))$
and $H_i=\Gal(E_i/k(D_i)K_i) \subset G_i$.

The subvariety $W_0:=g^{-1}(V_0)$ of  $W$ satisfies the equations (\ref{equ:conj}).  Recall  $\mathcal T_V$ to be the restriction of $\mathcal T$ on $V$ and $\mathcal T_V\cong V\times _{V_0} W_0$. Then $p'$ maps $\mathcal T_V$ onto $W_0$. 
Let $$B'=\{\Cor_{L_i/k}(\chi,\mathbf z_i): 1\leqslant i \leqslant n,\chi\in H^1(K_i/L_i,\QQ/\ZZ)\},$$ 
obviously $B'\subset \Br_1(W_0)$. Write $\mathbb A_k^1=\text{Spec } k[t]:=\Bbb P^1_k\setminus \{\infty\}$. Let $$ B''=\{\Cor_{k_i/k}(\chi, t-a_i  ): n+1\leqslant i \leqslant N,\chi\in H^1(K_i/k_i,\QQ/\ZZ)\},$$
obviously $B''\subset \Br_1(U'_0)$.

Since $B$, $B'$ and $B''$ are finite groups,   $p_V^*(B)$  is a finite subgroup of $\Br(\mathcal T_V)$,   $p'^*_{W_0}(B')$ and $p^*(f^*( B''))$ are finite subsets of $\Br_1(\mathcal T_V)$. Let $A$ be the finite subgroups of $\Br(\mathcal T_V)$ generated by $p_V^*(B)$, $p'^*_{W_0}(B')$ and $p^*(f^*( B''))$.
According to (\ref{Har}), by Harari's formal lemma (see \cite[Theorem 1.4]{ctbudapest}, \cite{harariduke}),
there exists 
\begin{equation}\label{har}
(y'_v)_v \in \mathcal T_V({\bf A}_k)^{A}
\end{equation}
arbitrarily close to $(y_v)_v$ in $\mathcal T({\bf A}_k)$, hence its image $(p(y'_v))_v$ is arbitrarily close to $(x_v)_v$ in~$X({\bf A}_k)$. Furthermore, we may assume $p(y'_v)$ belongs to a smooth fiber of $f$ above $V_0$ for each $v$ by the implicit function theorem.

%Note that $\mathcal T_V$ is a dense open subset of $X\times_{\mathbb P^1_k}W$. By our assumption, $W$ satisfies strong approximation off $v_0$ for almost all $v_0\in \Omega_{F/k}$. 
%By the inverse function theorem, we may assume that~$x''_v$ belongs to a smooth fiber of~$f$ for each~$v$.

Let $S$ be a finite subset of $\Omega_k$ containing $\infty_k$ and the places at which we want to approximate $(x_v)_v$. We enlarge $S$ such that :

\begin{enumerate}[i)]

\item The varieties $\mathcal T$, $X$, $V$ and $W$ extends to smooth integral $\mathcal O_S $-model $\mathbf T$,  $\mathcal X$, $\mathcal V$ and $\mathcal W$, such that diagrams (\ref{com:basic2}) and (\ref{com:basic}) can be extended to the corresponding  commutative diagrams.
% $f: 	X\to  \mathbb P^1_k$, $p:\mathcal T \to X$ and $g: W\to \mathbb P^1_k$	extends to flat morphisms $f: 	\mathcal X \to  \mathbb P^1_{\mathcal O_S}$, $p:\mathbf T \to \mathcal X$ and $g: \mathcal W\to \mathbb P^1_{\mathcal O_S}$.  
Let $\mathcal W_0$, $\mathcal U_0$ and $\mathcal U'_0$ be open corresponding subschemes of $\mathcal W$ and $\Bbb P^1_{\mathcal O_S}$ which extends $W_0$, $U_0$ and $U'_0$, respectively. Furthermore, we may assume that $y'_v \in \mathbf T_\mathcal V(\mathcal O_v)$ for 
all $v\not \in S$ , that $B \subset \Br(f^{-1}(\mathcal U'_0))$, that $ B' \subset  \Br(\mathcal W_0)$, and that $B''\subset \Br(\mathcal U'_0)$. 

\item At any place $v\not \in S$, $\prod_{i=1}^N P_i(t)$ is an $\mathcal O_v$-polynomial  and separable  $\text{ mod } v$, $K_i/k$ ( hence $L_i/k$) is unramified  for any $i\in \{1,\cdots,N\}$, $b_i$ is a unit for any $i\in \{1,\cdots, n\}$ (see the definition of $W$), and the order of $A$ is invertible. 

\item
For each~$i\in \{1,\cdots,N\}$, let $D_i^0$ be the  complement  of  $D_i$ by another components of $f$ over $m_i$;  let~$\sD_i$ be the reduced closed subscheme of~$D_i$ in~$\sX$, let $\sD_i^0$ be the open subscheme of $\sD_i$ associated with $D_i^0$, and let $\sE_i$ be
the normalization of~$\sD_i^0$ in $E_i/k(D_i)$.
We may choose $\sD_i^0$ to be small enough such that $\sD_i^0$ is smooth over $\mathcal O_S$ and $\sE_i$ is finite and \'etale over~$\sD_i^0$.

\item At any place $v\not \in S$, there exist smooth local points in the fiber of~$f$ over any closed point of~$\sU'_0$;  for any $1\leq i\leq N$ and any place~$w$ of~$k_i$ which is not above $S$, if~$L_i$ possesses a place of degree~$1$ above~$w$ then there exist smooth rational points in the closed fibers of $\sD_i^0 \to \mtilde_i$ over the rational point corresponding $w$, where $\mtilde_i$ is  the Zariski closure of $m_i$ in $\Bbb P^1_{\sOint_S}$. These are possible by the Lang--Weil  estimates \cite{langweil};

\item For any $1\leq i\leq N$, any place~$w$ of~$k_i$ which does not lie above $S$ and splits completely in~$L_i$ 
	and any element $\sigma$ of~$H_i$, there exists a rational point
	in the fiber of $\sD_i^0 \to \mtilde_i$ above the closed point corresponding to~$w$, such that $\sigma $ is equal to the Frobenius automorphism of the \'etale cover $\sE_i\to\sD_i^0$
	at this rational point. This is possible by the geometric Chebotarev's density theorem (see \cite[Lemma~1.2]{ekedahl}).
\end{enumerate}

Finally, we may choose $N$ distinct places $v_{1},\dots,v_N$ which are not in $S$ and each $v_i$ splits completely in~$K_i$ by Chebotarev's density theorem.
For each place $v_i$,
we fix a place $w_i$ of~$k_i$ above $v_i$
and $t_{v_i} \in k_{v_i}$
satisfying $\ord_{w_i}(t_{v_i}-a_i)=1$. For any such place $v_i$, since $\mathbf T_\mathcal  V= \mathcal V\times _{\mathcal V_0} \mathcal W$,  we may replace the integral point $y'_{v_i}$ in (\ref{har}) of $\mathcal T_V$ by an integral point of $\mathbf T_\mathcal V$ above the point of $\mathbb P^1_{\mathcal O_{v_i}}$  with coordinator $t_{v_i}$. In fact,  if $B\subset \Br_1(f^{-1}(U'_0))$, this step can be omitted.

Let $S'=S \cup\{v_{1},\dots,v_N\}$. Let $F$ be the composite  of all  fields $K_i$ with $1\leq i \leq n$.
By our assumption, $W$ satisfies strong approximation off $\Omega_{F/k}\setminus S'$. 
Therefore, there exists a point $c'=(\lambda_0,\mu_0,\mathbf{z}_0)\in W(k)$ such that $c'$ is very close to $p'(y'_v)\in W_0(k_v)$ for $v\in S'$ and $c' \in \sW(\mathcal O_v)$ for $v\not \in (\Omega_{F/k}\cup S')$. In particular, the coordinator $t_0:=u_0/v_0$ of $c:=g(c') \in \Bbb P^1(k)$ is very close to $t_{v_i}$ at $v_i$ for $i\in\{1,\dots,N\}$.

We may assume that $c'\in W_0(k)$ and that $X_c$ is smooth.
For each $v \in S$, then
we may choose a local point $x'_v \in X_c(k_v)$ arbitrarily close to $x_v$ by the implicit  function theorem.

In the following, we will construct $x'_v \in X_c(k_v)$ for $v \not \in S$.  This step is similar as in \cite[Theorem 9.17]{HW}.

%In fact, $T_V= V\times_{\mathbb P^1} W$, since $c$ be  the image of the point $c'$ in $W_0\subset W$, any rational point $x'_v$ gives a rational point $z'_v \in T_V(k_v)$ over $c'$.

%For $v \not \in S$, we denote $w \in \Bbb P^1_{\sOint_S}$ to be the closed %point $w=\tilde c \cap \Bbb P^1_{k(v)}$.

%Then $\{\mtilde_1,\cdots,\mtilde_N\}=\mathbb P^1_{\sOint_S} \setminus \mathcal U'_0$.

For any closed point $m \in \Bbb P^1_k$, denote~$\mtilde$ is  the Zariski closure of $m$ in $\Bbb P^1_{\sOint_S}$.
For $1\leq i \leq N$,  let $\Lambda_i=\{v\not \in S: \tilde c\ \text{mod}\ v\  \in \ \mtilde_i  \ \text{mod}\ v\}.$ Obviously, all $\Lambda_i$ are finite and pairwise disjoint. For $v\in \Lambda_i$, we denote  $w$  to be the unique place of $k_i$ above $v$ corresponding to $c$, then one has $\ord_w(t_0-a_i)>0$. 
Since $t_0$ is very close to~$t_{v_i}$,
 it implies $\ord_{w_i}(t_0-a_i)=1$, hence $v_i \in \Lambda_i$.

If $v \not \in S$ and $v\not \in \bigcup_{i=1}^N\Lambda_i$, we can choose a  $k_v$-point $x'_v$ in $X_c$ by $iv)$ and Hensel's lemma.

Suppose that  $v$ belongs to some~$\Lambda_i$ and  $v\in (\Omega_{F/k}\setminus S')$, by the definition of $\Omega_{F/k}$, $K_i$ (hence $L_i$) is totally split at $v$.
Suppose that $v$ belongs to some~$\Lambda_i$ and $v\not \in (S'\cup \Omega_{F/k})$, then $\mathbf{z}_0 \mod v$ is a smooth $k(v)$-point of the equation $N_{L_i/k_i}(\mathbf z)\equiv 0 \mod v$ by the definition of $W$; hence $N_{L_i/k_i}(\mathbf z)= 0 $ has a smooth $k_v$-point by Hensel's lemma, which implies that $L_i$ contains a place of degree 1 above $v$. 
Therefore, for any $1\leq i \leq N$ and any $v \in \Lambda_i \setminus \{v_i\}$, $L_i/k$ contains a place of degree $1$ above $v$,   
hence we can choose a rational
point~$\zeta_{w,i}$ of the fiber of $\sD_i^0 \to \mtilde_i$ over~$w$ by $iv)$, 
then we may lift it to a $k_v$-point~$x'_v$ of~$X_c$ by Hensel's lemma.

Now, we only need to 
construct~$x'_v$ for remaining places $v_{1},\dots,v_N$.
For $1\leq i \leq N$ and $v \in \Lambda_i$,  we define $$n_{w,i} = \ord_w(t_0-a_i) \text { and } \sigma_i=\sum_{v \in \Lambda_i \setminus \{v_i\}} n_{w,i} \Frob_{\zeta_{w,i}} \in G_i,$$
where $\Frob_{\zeta_{w,i}}$ is the Frobenius automorphism of
the \'etale cover $\sE_i\to\sD_i^0$
at $\zeta_{w,i}$.

We claim:	
\begin{equation}\label{lem:isinhi}
	\text{for each } i \in \{1,\dots,N\}, \text{ one has } \sigma_i \in H_i.
\end{equation}

%In the following, we prove the claim.
\begin{enumerate}[a)]
\item 	Suppose $i\in \{1,\cdots,n\}$. For any $v\in \Lambda_i \setminus \{v_i\}$, $x'_v \text{ mod } v$ is in a non-split fiber. In \cite[Theorem 9.17]{HW}, it is \emph{already} dealt with by the choice of sufficient large $L_i$ in their conjecture 9.1.  In our case, we observe that $x'_v$ gives a point $z'_v$ in $V\times_{\mathbb P^1_k} W$, hence in $\mathcal T_V$ over $c'$.  Replacing the fibration $X \to \P^1_k$ by the fibration $p': \mathcal T_V \to W$, we will show it by a similar argument as in \cite{harariduke,hararifleches}. 
 % Noting that any $x'_v$  in fact gives a point in $\mathcal T_V(k_v)$. 
	
	For any $\gamma=\Cor_{L_i/k}(\chi, \mathbf z_i)\in B'\subset \Br(W_0)$, since $\sum_{v \in \Omega} \inv_v \gamma(p'(y'_v))=0$ and
	since $\gamma(p'(y'_v))=0$ for $v \not \in S$, we have $$\sum_{v\in S} \inv_v \gamma(p'(y'_v))=0.$$
	Since $c'$ is arbitrarily close to~$p'(y'_v)$ for $v \in S$,
	one obtains \begin{equation} \label{vanish1}
	\sum_{v\in S} \inv_v \gamma(c')=0.
	\end{equation}
	By the global reciprocity law and (\ref{vanish1}), we have $\sum_{v\not \in S} \inv_v \gamma(c')=0.$
	Moreover, we have $\inv_v \gamma(c')=0$
	for $v \not \in  S \cup \Lambda_i$. If $v= v_i \text{ or } v\in (\Omega_{F/k}\setminus S')$,
	  by our choice, $K_i/k_i$ is totally split, hence $$\inv_{v} \gamma(c')=0 \text { and  } \chi(\Frob_w)=0,$$ 	where $\Frob_w \in \Gal(K_i/L_i)$ denotes the Frobenius automorphism at~$w$. Therefore, one obtains that $$\sum_{v\in\Lambda_i\setminus(\Omega_{F/k}\cup\{v_i\})} \inv_v \gamma(c')=0.$$
	For $v \in \Lambda_i\setminus \Omega_{F/k}$, by $ii)$, we have $t_0=\lambda_0/\mu_0\in \mathcal O_v$. Note that $t-a_i=N_{L_i/k_i}(\zz_i)$, we have $\ord_v(N_{L_i/k_i}(\zz_0))=n_{w\mkern-1mu,\mkern1mui}$. Since the fiber of $\mathcal T_V$ over $c'$ is just $X_c$ and since $x'_v$ gives the point $z'_v$ in $\mathcal T_V$ over $c'$, hence
	\begin{align*}
	\inv_v \gamma(c')=n_{w\mkern-1mu,\mkern1mui} \mkern2mu\chi(\Frob_w).
	\end{align*}
	Therefore
	\begin{align*}
	\chi(\sigma_i)=\sum_{v \in \Lambda_i\setminus(\Omega_{F/k}\cup\{v_i\})}
	n_{w\mkern-1mu,\mkern1mui} \mkern2mu\chi(\Frob_w)=\sum_{v\in\Lambda_i\setminus(\Omega_{F/k}\cup\{v_i\})} \inv_v \gamma(c')=0\rlap{\text{.}}
	\end{align*}
	Note that $\chi$ can run through $H^1(K_i/L_i,\QQ/\ZZ)$,  hence  $\sigma_i\in H_i$.

\item Suppose $i \in \{n+1,\dots,N\}$, this case is similar as in \cite[Theorem 9.17]{HW} or in \cite{harariduke,hararifleches}. For arbitrary character $\chi\in H^1(K_i/k_i,\QQ/\ZZ)$, let $\gamma=\Cor_{k_i/k}(\chi, t-a_i)\in B''$.
	Since  $\sum_{v \in \Omega} \inv_v \gamma(g(p(y'_v)))=0$ and 	since $\gamma(g(p(y'_v)))=0$ for $v \not \in  S$, one has $\sum_{v\in S} \inv_v \gamma(g(p(y'_v)))=0$. Since $(\lambda_0,\mu_0,\mathbf z_0)$ is arbitrarily close to~$p(y'_v)$ for $v \in S$,
	one obtains \begin{equation}\label{vanish2}
	\sum_{v\in S} \inv_v \gamma(t_0)=0,
	\end{equation}
	with $t_0=\lambda_0/\mu_0$.
	By the global reciprocity law and (\ref{vanish2}), we deduce 
$\sum_{v\not \in S} \inv_v \gamma(t_0)=0.$
	
	Moreover, we have $\inv_v \gamma(t_0)=0$
	for $v \not \in  S \cup \Lambda_i$. If $v= v_i$, by our choice, $K_i/k_i$ is totally split, hence $\inv_{v_i} \gamma(t_0)=0$ and $\chi(\Frob_w)=0$. Hence, \begin{equation}\label{vanish3}
	\sum_{v\in\Lambda_i\setminus\{v_i\}} \inv_v \gamma(t_0)=0.
	\end{equation}
	For $v \in \Lambda_i$,
	we have
	\begin{align*}
	\inv_v \gamma(t_0)=n_{w\mkern-1mu,\mkern1mui} \mkern2mu\chi(\Frob_w)\rlap{\text{,}}
	\end{align*}
	where $\Frob_w \in \Gal(K_i/k_i)$ denotes the Frobenius at~$w$,
	since $\gamma \in \Br(\sU'_0)$.
	By (\ref{vanish3}),
	one obtains
	\begin{align*}
	\chi(\sigma_i)=\sum_{v \in \Lambda_i\setminus \{v_i\}}
	n_{w\mkern-1mu,\mkern1mui} \mkern2mu\chi(\Frob_w)=\sum_{v\in\Lambda_i\setminus \{v_i\}} \inv_v \gamma(t_0)=0\rlap{\text{.}}
	\end{align*}
	Note that $\chi$ can run through $H^1(K_i/k_i,\QQ/\ZZ)$,  hence  $\sigma_i\in H_i$. 
\end{enumerate}

By the above claim and $v)$,  for each $v_i$ with $1\leq i \leq N$, there is
a rational point~$\zeta_{w_i,i}$ of the fiber of $\sD_i^0\to \mtilde_i$ above~$w_i$
such that $\Frob_{\zeta_{w_i,i}}=-\sigma_i$.
Therefore, 
\begin{align}
\label{eq:sumfrob}
\sum_{v \in \Lambda_i} n_{w,i} \mkern2mu\Frob_{\zeta_{w,i}}=0.
\end{align}
By Hensel's Lemma, there is  a $k_{v_i}$-point $x'_{v_i}$ of~$X_c$ which lifts $\zeta_{w_i,i}$. Hence, this adelic point $(x'_v)_{v } \in X_c(\mathbf A_k)$ is 
arbitrarily close to~$(x_v)_{v \in \Omega}$ in $X(\mathbf A_k)$, and
we only need to show that $(x'_v)_{v }\in X_c(\mathbf A_k)^B$.

It is clear that $\sum_{v \in S} \inv_v \beta(p(y'_v))=0$ for any $\beta\in B$.
For $v \in S $, since $x'_v$ is very close to~$p(y'_v)$, we have $\beta(x'_v)=\beta(p(y'_v))$.
Therefore $$\sum_{v \in S} \inv_v \beta(x'_v)=0$$ for any $\beta\in B$.
For $v \in \Omega\setminus (S \cup \Lambda_1 \cup \dots \cup \Lambda_N)$, 
it it clear that  $\beta(x'_v)=0$ for any $\beta\in B$ since $B \subseteq \Br(f^{-1}(\sU'_0))$.
So
\begin{align*}
\sum_{v \in\Omega} \inv_v \beta(x'_v) = \sum_{i=1}^N \sum_{v \in \Lambda_i} \inv_v \beta(x'_v)
\end{align*}
for any $\beta \in B$.
Since $\beta \in \Br(f^{-1}(\sU'_0))$, by \cite[Corollaire~2.4.3]{harariduke},
one has $$\inv_v\beta(x'_v)=n_{w,i} \mkern2mu\partial_{\beta,D_i}(\Frob_{\zeta_{w,i}})$$
for any $\beta \in B$,
and any $v \in \Lambda_i$ with $1\leq 1\leq N$,
 where $\partial_{\beta,D_i} \in \Hom(G_i,\QQ/\ZZ)\subset H^1(k(D_i),\QQ/\ZZ)$
is  the residue of~$\beta$ at $D_i$.
By \eqref{eq:sumfrob}, 
one obtains $\sum_{v \in \Omega}\inv_v \beta(x'_v)=0$ for any $\beta \in B$.
\end{proof}

\begin{theorem}
	\label{cor:ratpointsRC}
	Let~$X$ be a smooth, proper, geometrically integral variety over a number field~$k$
	and let
	$f:X \to \Bbb P^1_k$
	be a dominant morphism with rationally connected geometric generic fiber.   Let $P_i(t)$ and $L_i$ be as in Theorem \ref{maintheorem}.  Assume that: 
	\begin{enumerate}
	\item strong approximation off $\Omega_{F/k}\setminus T$ holds for any $W$ associated to \\ $(P_1(t),L_1), \cdots, (P_n(t),L_n)$, where $F$ is a field extension of $k$ depending on $X$ and $T$ runs through  all finite subsets of  $\Omega_{F/k}$;
	
	\item there exists a Hilbert subset $H \subset \Bbb P^1_k$ such that
	$X_c(k)$ is dense in $X_c(\mathbf A_k)^{\Br(X_c)}$
	for every rational point~$c$ of~$H$.
\end{enumerate}	
Then~$X(k)$ is dense in $X(\mathbf A_k)^{\Br(X)}$.
\end{theorem}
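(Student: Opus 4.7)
The plan is to reduce Theorem \ref{cor:ratpointsRC} to Proposition \ref{main:1} by feeding that proposition a finite subgroup $B \subset \Br(f^{-1}(U'_0))$ whose specialization controls the whole Brauer group of the fiber at a Hilbert point. Fix $(x_v)_v \in X(\mathbf A_k)^{\Br(X)}$ and an adelic neighbourhood we wish to approximate; the goal is to produce a rational point of $X$ inside this neighbourhood.

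First, exploiting rational connectedness of the geometric generic fiber together with Harari's specialization theorem (see \cite{harariduke}, compare \cite[\S4]{HW}), I would produce a dense open $U'_0 \subset U_0$ containing $\infty$, a finite subgroup $B \subset \Br(f^{-1}(U'_0))$, and a Hilbert subset $H' \subset U'_0(k) \cap H$, such that for every $c \in H'$ the restriction map $B \to \Br(X_c)/\Br_0(X_c)$ is surjective. Rational connectedness guarantees finiteness of $\Br(X_c)/\Br_0(X_c)$ and allows a lifting of generators to the total space after shrinking $U'_0$. Since $(x_v)_v$ is orthogonal to $\Br(X)$, it is a fortiori orthogonal to $B \cap \Br(X) + \Br_1(X)$, so the assumptions of Proposition \ref{main:1} are in force: hypothesis~(1) of the theorem is exactly hypothesis~a) there.

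Next, I would apply Proposition \ref{main:1} to obtain $c \in U'_0(k)$ and $(x'_v)_v \in X_c(\mathbf A_k)$ orthogonal to $B$ and arbitrarily close to $(x_v)_v$. To force $c$ to lie in $H'$, I would revisit the end of the proof of Proposition \ref{main:1}, where $c = g(c')$ is produced from strong approximation on $W$: by imposing supplementary congruence conditions at a finite set of auxiliary places (a standard combination of Hilbertianity of $k$ with strong approximation on $W$, in the same spirit as the geometric Chebotarev input $v)$ in that proof) one can arrange $c \in H'$ without disturbing either the closeness to $(x_v)_v$ or the orthogonality to $B$. Once $c \in H'$, the surjectivity of $B \to \Br(X_c)/\Br_0(X_c)$ implies $(x'_v)_v \in X_c(\mathbf A_k)^{\Br(X_c)}$, and hypothesis~(2) then furnishes a rational point of $X_c \subset X$ arbitrarily close to $(x'_v)_v$, hence to $(x_v)_v$.

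The main obstacle is this compatibility step: Proposition \ref{main:1} as stated does not a priori deliver a Hilbert point, so one must weave the Hilbertian constraint into the construction of $c$ by exploiting the freedom at auxiliary places while simultaneously meeting the strong approximation conditions on $W$. This parallels the corresponding step in \cite[Theorem~9.17]{HW}, and once it is in place the remainder of the proof is a routine application of the descent-plus-fibration framework established above.
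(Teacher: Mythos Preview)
Your proposal is correct and follows essentially the same route as the paper: choose a finite $B \subset \Br(f^{-1}(U'_0))$ surjecting onto $\Br(X_c)/\Br_0(X_c)$ at Hilbert points (the paper cites \cite[Lemma~8.6 and Proposition~4.1]{HW} for this), apply Proposition~\ref{main:1}, and then approximate in the fiber using hypothesis~(2). For the compatibility step you flag as the main obstacle, the paper handles it in one line by invoking Skorobogatov~\cite{skofibration} together with Ekedahl's effective Hilbert irreducibility~\cite{ekedahl}, which is precisely the ``supplementary congruence conditions at auxiliary places'' mechanism you describe.
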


\begin{proof}
	Let $U_0$ be as in Theorem \ref{maintheorem}. Since the generic fiber of $f$ is rationally connected, $\Br(X_\eta)/f_\eta^*\mkern.5mu\Br(\eta)$ is finite by \cite[Lemma 8.6]{HW}.
	Choosing $U_0$ small enough, we may assume that there exists
	a finite subgroup $B \subset \Br(f^{-1}(U_0))$ such that the map $B \to \Br(X_\eta)/f_\eta^*\Br(\eta)$ is surjective.
	On the other hand, we may choose a Hilbert subset $H' \subset U_0$ such that   the map $B \to \Br(X_c)/f_c^* (\Br(k))$ is surjective at any $c\in H'(k)$ by \cite[Proposition 4.1]{HW}.
	
	By Proposition \ref{main:1}, we have	
	\begin{equation}\label{weak1}
	\bigcup_{c\in U_0(k)} X_c(\mathbf A_k)^{B} \ \ \ \text{is dense in} \ \ \ X(\mathbf A_k)^{\Br(X)}; 
	\end{equation}
	as in \cite{skofibration}, the same proof shows that $c$ can be required in rational points in $U_0\cap H\cap H'$ in (\ref{weak1}) 	 (this uses Ekedahl's version of Hilbert's irreducibility theorem \cite{ekedahl}).
	By (2), we may approximate in the fiber~$X_c$.
\end{proof}

\begin{cor}
	\label{cor:smallrank}
	Let~$X$ be a smooth, proper, geometrically integral  variety over
	a number field~$k$
	and $f:X \to \mathbb P^1_k$
	be a dominant morphism
	with rationally connected geometric generic fiber.
	Assume that
	$\mathrm{rank}(f)\leq 2$.
	If~$X_c(k)$ is dense in $X_c(\mathbf A_k)^{\Br(X_c)}$ for every
	rational point~$c$ of a Hilbert subset of~$\Bbb P^1_k$, then~$X(k)$ is dense in~$X(\mathbf A_k)^{\Br(X)}$.
\end{cor}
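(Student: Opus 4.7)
The strategy is to deduce this corollary from Theorem \ref{cor:ratpointsRC}: it suffices to verify that, under the assumption $\mathrm{rank}(f) \leq 2$, any variety $W$ associated to the relevant pairs $(P_i(t), L_i)$ satisfies strong approximation off $\Omega_{F/k} \setminus T$ for some finite extension $F/k$ and every finite $T \subset \Omega_{F/k}$. Since $\mathrm{rank}(f) \leq 2$, the system (\ref{equ:conj}) defining $W$ involves at most two norm equations, and one enumerates the cases according to $n$ and $\deg P_i$: (i) $n = 0$, so $W = \mathbb A^2_k \setminus \{(0,0)\}$; (ii) $n = 1$ and $\deg P_1 = 1$, a single norm equation over $k$; (iii) $n = 1$ and $\deg P_1 = 2$, a single norm equation over the quadratic extension $k_1/k$; (iv) $n = 2$ with $\deg P_1 = \deg P_2 = 1$, two norm equations over $k$.

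Case (i) is trivial, as $\mathbb A^2_k \setminus \{(0,0)\}$ satisfies strong approximation off any non-empty set of places. In the remaining cases $W$ is $k$-rational (one eliminates $\lambda$ using that $b_i \ne 0$), and the fibration $g : W \to \mathbb P^1_k$ of diagram (\ref{com:basic}) has general fiber a torsor under the product $\prod_i R^1_{L_i/k_i}(\mathbb G_m)$ of norm-one tori away from the non-split locus $\Psi$. The plan is to prove strong approximation for $W$ by a fibration argument on $g$: using weak approximation on $\mathbb P^1_k$ together with the density of rational points, we approximate the base, and at any place $v \in \Omega_{F/k}$, where $F$ is chosen to be the compositum of the $L_i$, every norm-one torus $R^1_{L_i/k_i}(\mathbb G_m)$ becomes split, so its torsors satisfy strong approximation off any finite set. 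Combined with the fact, recalled from the introduction via \cite[Proposition 1.2]{ct15}, that $\Br_1(W) = \Br_0(W)$, this handles cases (ii) and (iv) without Brauer--Manin interference.

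The main obstacle is case (iii): the only non-split fiber lies over a degree-$2$ closed point of $\mathbb P^1_k$, so the sole norm form is taken over $k_1$ rather than over $k$, and the fibration argument on $g$ must be reconciled with the $k$-structure of $W$. I would handle this either by combining Hasse's classical norm principle for the quadratic extension $k_1/k$ with the fibration argument directly over $k$, or by a restriction-of-scalars reduction: after base change to $k_1$, the polynomial $P_1(t)$ factors into two linear pieces, so $W \otimes_k k_1$ takes the shape of case (iv) over $k_1$, and strong approximation descends to $W$ via a Weil restriction compatibility together with the triviality of the algebraic Brauer group. Once strong approximation for $W$ has been established in all four cases, Theorem \ref{cor:ratpointsRC} applies and delivers the density of $X(k)$ in $X(\mathbf A_k)^{\Br(X)}$.
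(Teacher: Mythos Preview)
Your overall strategy of deducing the result from Theorem~\ref{cor:ratpointsRC} by verifying strong approximation for $W$ coincides with the paper's (for $\mathrm{rank}(f)=2$; for $\mathrm{rank}(f)\leq 1$ the paper simply invokes Harari \cite{harariduke,hararifleches}). The difference lies in how strong approximation for $W$ is established, and here you miss a much simpler uniform argument.

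When $\sum_i\deg P_i\leq 2$, the system~(\ref{equ:conj}) amounts to at most two $k$-linear conditions on $(\lambda,\mu)$, and the distinctness of the roots $a_i$ (together with $a_1\notin k$ in your case~(iii)) makes these conditions independent. One can therefore solve for $(\lambda,\mu)$ polynomially in the $\mathbf z_i$, exhibiting $W$ as the complement of a codimension~$\geq 2$ closed subset in the affine space $\prod_i R_{L_i/k}(\mathbb A^1_{L_i})$. Strong approximation off any single place then follows from \cite{caoxu,weitorus}. This handles all four of your cases at once, including~(iii), where no separate restriction-of-scalars or Hasse-norm trick is needed.

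By contrast, your fibration argument on $g:W\to\mathbb P^1_k$ has a real gap. You assert that, at places of $\Omega_{F/k}$, the norm-one tori $R^1_{L_i/k_i}(\mathbb G_m)$ become split and hence ``their torsors satisfy strong approximation off any finite set''. But a torsor under a split torus is isomorphic to $\mathbb G_m^r$, and $\mathbb G_m$ does \emph{not} satisfy strong approximation off any finite set of places. Even aiming only for the weaker goal (off the infinite set $\Omega_{F/k}\setminus T$), a fibration argument for strong approximation requires considerably more than strong approximation on fibers plus weak approximation on the base; compare \cite{CTH16}. Your proposed treatments of case~(iii) remain sketches, and are in any event unnecessary once one uses the affine-space description above.
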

\begin{proof}Corollary~\ref{cor:smallrank} is due to Harari~\cite{harariduke} \cite{hararifleches} when $\mathrm{rank}(f)\leq 1$. If $\mathrm{rank}(f) = 2$, the variety $W$ (see Conjecture \ref{conjecture:1}) is isomorphic to the complement of an affine space by a close subset of codimension $2$, by  \cite{caoxu,weitorus}, $W$ satisfies strong approximation off a place $v_0$, the proof follows from Theorem \ref{cor:ratpointsRC}.
\end{proof}

\begin{remark}
Fibrations over~$\Bbb P^1_k$ with rank~$2$ had been dealt with (using the descent method)
in~\cite[Theorem~A]{ctskodescent} under the assumption that the fibers above a Hilbert set of rational points satisfy weak approximation. It had also been dealt with in \cite[Theorem 9.31]{HW} when $k$ is totally imaginary
or the non-split fibers of~$f$ lie over rational points of~$\Bbb P^1_k$. 
\end{remark}

\begin{lemma}\label{sa:quadric}
	
	 Let $Y_n$ be the punctured affine cone over  a smooth projective
quadric of dimension~$n\geq 1$, let $Z$ be a smooth open subvariety of $Y_n$ and $Y_n\setminus Z$ has codimension $\geq 2$ in $Y_n$. Then
$Z$ satisfies strong approximation with algebraic Brauer--Manin obstruction off $v_0$, where $v_0$ can run through almost all finite places of $k$. Furthermore, 
	\begin{enumerate}
	\item if $n=1$, $\Pic(\overline Z)\cong \Bbb Z/2\Bbb Z$ as abelian groups and $\Br_1(Z)/\Br_0(Z)$ is infinite; 
	\item if $n=2$, $\Pic(\overline Z)\cong \Bbb Z$ as abelian groups  and  $\Br_1(Z)/\Br_0(Z)$ is finite.
	\item if $n\geq 3$, $\Pic(\overline Z)=0$ and $\Br_1(Z)=\Br_0(Z)$, then $Y_n$ satisfies strong approximation off $v_0$.
\end{enumerate} 
\end{lemma}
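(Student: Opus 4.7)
The plan is to compute $\Pic(\overline{Y_n})$ and $\Br_1(Z)/\Br_0(Z)$ via the natural $\GmZ$-torsor $Y_n \to Q$, where $Q \subset \mathbb{P}^{n+1}_k$ is the smooth projective quadric of dimension $n$, to transfer the computation to $Z$ using the codimension~$\geq 2$ hypothesis, and to invoke the strong approximation theorems of Cao--Xu \cite{caoxu} and Wei \cite{weitorus} for the approximation statement.

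For the Picard group computation, the $\GmZ$-torsor $Y_n \to Q$ has class the hyperplane class $[H] \in \Pic(\overline Q)$, yielding the right-exact sequence
\[
\mathbb{Z}\cdot[H] \to \Pic(\overline Q) \to \Pic(\overline{Y_n}) \to 0.
\]
For $n=1$ one has $\overline Q \cong \mathbb{P}^1$ with $[H]$ of degree $2$, giving $\Pic(\overline{Y_1}) \cong \mathbb{Z}/2\mathbb{Z}$. For $n=2$, $\Pic(\overline Q) = \mathbb{Z}^2$ is generated by the two rulings with $[H]=(1,1)$, giving $\Pic(\overline{Y_2}) \cong \mathbb{Z}$. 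For $n \geq 3$, $\Pic(\overline Q) = \mathbb{Z}\cdot[H]$, giving $\Pic(\overline{Y_n}) = 0$. Because $\overline{Y_n}$ is smooth and $\overline{Y_n}\setminus\overline Z$ has codimension $\geq 2$, restriction yields $\Pic(\overline Z) = \Pic(\overline{Y_n})$ and $\kbar[Z]^\times = \kbar^\times$.

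For the Brauer group, since $\kbar[Z]^\times = \kbar^\times$ and $H^3(k,\kbar^\times) = 0$ over a number field, the Hochschild--Serre spectral sequence produces an isomorphism $\Br_1(Z)/\Br_0(Z) \cong H^1(k, \Pic(\overline Z))$. For $n=1$ the Galois action on $\mathbb{Z}/2\mathbb{Z}$ is trivial, and $H^1(k,\mathbb{Z}/2\mathbb{Z}) = k^\times/k^{\times 2}$ is infinite. For $n=2$, the Galois module $\Pic(\overline Z) \cong \mathbb{Z}$ is acted on through $\{\pm 1\}$, according to whether the two rulings of $\overline Q$ are swapped by Galois, so $H^1$ is either $0$ or $\mathbb{Z}/2\mathbb{Z}$, and hence finite. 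For $n \geq 3$, $\Pic(\overline Z)=0$ gives $\Br_1(Z)=\Br_0(Z)$.

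Finally, for strong approximation, I would first establish strong approximation with the algebraic Brauer--Manin obstruction off $v_0$ for $Y_n$ itself, and then descend to $Z$ via the standard principle that for a smooth variety this property is inherited by open subschemes whose complement has codimension $\geq 2$ (adelic points extending uniquely by purity). For $Y_n$, the theorems of \cite{caoxu} and \cite{weitorus} apply to the relevant torus/group-structured setting arising from $Y_n \to Q$; the excluded place $v_0$ may be any finite place at which the defining quadratic form is isotropic, which rules out only finitely many places. When $n \geq 3$, $\Br_1 = \Br_0$, so the algebraic Brauer--Manin obstruction is empty and we recover ordinary strong approximation for $Y_n$, giving the last assertion of~(3). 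The main obstacle, I expect, is for $n=1,2$: matching the Brauer--Manin obstruction arising from \cite{caoxu, weitorus} precisely with the algebraic part $\Br_1(Z)/\Br_0(Z)$ computed above, i.e., checking that no transcendental Brauer classes are needed.
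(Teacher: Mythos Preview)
Your computations of $\Pic(\overline Z)$ via the $\GmZ$-torsor $Y_n\to Q$ and of $\Br_1(Z)/\Br_0(Z)$ via Hochschild--Serre are correct and agree with what the paper does (the paper just says ``by an easy computation'').

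The genuine gap is in the strong approximation step. You invoke \cite{caoxu} and \cite{weitorus} uniformly in~$n$, but both results are about \emph{toric} varieties, and $Y_n$ is toric only for $n\le 2$: over~$\kbar$ one has $Y_1\cong\{uv=w^2\}\setminus 0$ and $Y_2\cong\{uw=vz\}\setminus 0$, which are affine toric, whereas a smooth projective quadric of dimension $\ge 3$ is not toric, hence neither is $Y_n$ for $n\ge 3$. So your citation does settle $n=1,2$ --- and your worry there about transcendental Brauer classes is misplaced, since toric varieties are geometrically rational, so $\Br(\overline Z)=0$ and $\Br_1=\Br$ automatically --- but it says nothing for $n\ge 3$. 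The vague reference to ``the torus/group-structured setting arising from $Y_n\to Q$'' does not repair this: a $\GmZ$-torsor over a non-toric base is not covered by those theorems.

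For $n\ge 3$ the paper instead argues by induction on~$n$. After removing a further codimension~$\ge 2$ locus, one fibers $Y_n$ over $\mathbb{P}^1_k$ via $(x_1,\dots,x_{n+2})\mapsto[x_{n+1}:x_{n+2}]$; over a dense open of $\mathbb{P}^1_k$ each fiber is (up to codimension $\ge 2$) a form of $Y_{n-1}$, and a fibration argument as in Proposition~\ref{main:1} (or \cite{CTH16}) transfers strong approximation from the fibers to the total space, using that $\Br_1/\Br_0$ is finite for $n\ge 2$. This inductive structure is also why the correct constraint on $v_0$ is isotropy of the \emph{ternary} subform $a_1x_1^2+a_2x_2^2+a_3x_3^2$: it must persist all the way down the induction. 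Your proposed condition, isotropy of the full form, is vacuous at finite places once $n\ge 3$.
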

\begin{proof} We may assume $Y_n\subset \Bbb A^{n+2}_k$ defined by the equation 
	\begin{equation}
	a_1x_1^2+\cdots + a_{n+2}x_{n+2}^2=0
	\end{equation}
	with all $a_i\neq 0$.
By an easy computation, we have  $\bar k[Z]^\times=\bar k^\times$ and 
$$\Pic(\overline Z)\cong\begin{cases}
\Bbb Z/2\Bbb Z\ \ \  \text{ if } n=1;\\
\Bbb Z\ \ \ \ \ \ \ \  \text{ if } n=2;\\
0 \ \ \ \ \ \ \ \ \ \text{ if } n\geq 3.
\end{cases}
$$
Then, by Hochschild-Serre's spectral sequence, one obtains 
\begin{equation}\label{br-finite}
\Br_1(Z)/\Br_0(Z)=\begin{cases}
\infty\ \ \  \ \ \ \ \ \   \text{ if } n=1;\\
\text{finite }\ \ \ \    \text{ if } n=2;\\
0 \ \ \ \ \ \ \ \ \ \       \text{ if } n\geq 3.
\end{cases}
\end{equation}

We always assume $Z(\mathbf A_k)^{\Br_1(Z)}\neq \emptyset.$
Suppose $n=1 \text{ or } 2$, then $Y_n \setminus \{(0,\cdots,0)\}$	is a smooth toric variety, hence $Z$ satisfies strong approximation with algebraic Brauer--Manin obstruction off any places $v_0$ by \cite[Theorem 1.1]{weitorus}.

Now we suppose $n\geq 3$. We will discuss it by induction on $n$. Let $Z'$ be the complement of the closed subsets $\{x_{n+1}=x_{n+2}=0\}$ and $\{x_1=\cdots =x_{n}=0\}$ in $Z$. Obviously $Y_n\setminus Z'$ also has codimension $\geq 2$ in $Y_n$. We only need to prove $Z'$ satisfies strong approximation off $v_0$ since $Z\setminus Z'$ has codimension $ 2$.  Define a morphism
$f:Z'\to \Bbb P^1_k, (x_1,\cdots, x_{n+2}) \mapsto [x_{n+1}:x_{n+2}]$. 
It is clear that any geometrical fiber of $f$ is geometric integral, and that there exists a dense open subset $U_n$ of $\mathbb P^1_k$ at which any fiber is the complement of a codimension $\geq 2$ closed subset in some $Y_{n-1}$. 
Let $v_0$ be a place such that $a_1x_1^2+a_2x_2^2+a_3x_3^2=0$ has a nontrivial resolution at $v_0$. Obviously $v_0$ can run through almost all finite places of $k$. 
If any  fiber over a rational point of $U_0$ satisfies strong approximation with algebraic Brauer--Manin obstruction off $v_0$, by (\ref{br-finite}) and an easy  argument similar as in  Proposition \ref{main:1} or as in \cite{CTH16}, $Z'$ satisfies strong approximation off $v_0$. \qedhere 

%Let $V$ be smooth open subvariety $V$ of $Y_{n-1}$ such that its complement is of  %codimension $\geq 2$. If any such $V$ satisfies strong approximation  off $v_0$ 	
\end{proof}

\begin{cor}
	\label{cor:smallrank2}
	let~$X$ be a smooth, proper, geometrically integral  variety over
	a number field~$k$
	and $f:X \to \mathbb P^1_k$
	be a dominant morphism
	with rationally connected geometric generic fiber.
	Assume that
	$\mathrm{rank}(f)\leq 3$. Suppose every non-split fiber $X_m$ contains a multiplicity $1$ component $D_m$ such that the algebraic closure of $k(m)$ in the function field $k(D_m)$ is a quadratic field of $k(m)$.

	If~$X_c(k)$ is dense in $X_c(\mathbf A_k)^{\Br(X_c)}$ for every
	rational point~$c$ of a Hilbert subset of~$\Bbb P^1_k$, then~$X(k)$ is dense in~$X(\mathbf A_k)^{\Br(X)}$.
\end{cor}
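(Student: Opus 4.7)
The plan is to invoke Theorem \ref{cor:ratpointsRC}, so the task reduces to verifying that strong approximation off $\Omega_{F/k}\setminus T$ holds for any associated variety $W$. Since Corollary~\ref{cor:smallrank} already handles $\mathrm{rank}(f)\leq 2$, I focus on $\mathrm{rank}(f)=3$, where there are exactly three configurations of non-split fibers: (a) three fibers over rational points of $\Bbb P^1_k$; (b) one over a rational point and one over a closed point of degree $2$; (c) one over a closed point of degree $3$. Under the present hypothesis each $L_i/k_i$ is quadratic, so every norm form $N_{L_i/k_i}$ is a non-degenerate binary $k_i$-quadratic form. The goal is to show in each case that $W$ is isomorphic to the complement of a codimension~$2$ closed subset of the punctured affine cone over a smooth projective quadric of dimension~$4$. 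Lemma~\ref{sa:quadric}(3) will then give strong approximation for $W$ off $v_0$ for almost all finite places~$v_0$; since $\Omega_{F/k}\setminus T$ is cofinite in $\Omega_k$, for any finite $T$ we can choose such a $v_0\in \Omega_{F/k}\setminus T$ and deduce strong approximation off the larger set, which is exactly the hypothesis needed to apply Theorem~\ref{cor:ratpointsRC}.

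To realise $W$ as an affine quadric, I would eliminate $(\lambda,\mu)$ from the system~(\ref{equ:conj}) in each case. In case~(a), the three equations $b_i(\lambda-a_i\mu)=N_{L_i/k}(\mathbf{z}_i)$ are $k$-linear in $(\lambda,\mu)$ with a Vandermonde-type $2\times 2$ invertible minor (the $a_i$ are distinct), so I solve $(\lambda,\mu)$ from the first two and substitute into the third. In case~(b), the map $(\lambda,\mu)\mapsto \lambda-a_2\mu$ is a $k$-linear isomorphism $k^2\to k_2$, so the degree~$2$ equation determines $(\lambda,\mu)$ uniquely as a quadratic function of $\mathbf{z}_2$; substituting into the degree~$1$ equation yields one quadratic relation. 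In case~(c), the map $(\lambda,\mu)\mapsto b_1(\lambda-a_1\mu)$ is a $k$-linear injection $k^2\hookrightarrow k_1\cong k^3$, and requiring the $k_1$-valued form $N_{L_1/k_1}(\mathbf{z}_1)$ to lie in its image is a single $k$-linear condition, i.e.\ one quadratic equation in the six $k$-coordinates of $\mathbf{z}_1$. In every case the output is a single homogeneous quadratic equation in six $k$-variables, and the excised loci (the $F_i$ and the origin of $\Bbb A^2$) map to subsets of codimension~$2$ in this affine quadric.

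The main obstacle is to show that the resulting quadratic form in six variables is non-degenerate. I would verify this after base change to $\kbar$: since $L_i\otimes_k\kbar\cong \kbar^{2\deg P_i}$ decomposes according to the roots of $P_i$ in $\kbar$, each norm form factors as a product of hyperbolic pieces $z_{i,1}z_{i,2}$, and in all three cases the $\kbar$-system collapses to the uniform shape $c_j(\lambda-\alpha_j\mu)=z_{j,1}z_{j,2}$ for three distinct $\alpha_j\in\kbar$ (one for each geometric root of some $P_i$). The same elimination as in case~(a) then expresses the form over $\kbar$ as a sum of three hyperbolic planes in six variables, which is manifestly non-degenerate; all the coefficients produced in the elimination are nonzero precisely because the $\alpha_j$ are distinct and the $b_i^{(j)}\in\kbar^\times$. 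Consequently the affine quadric is smooth, Lemma~\ref{sa:quadric}(3) applies with $n=4$ (so $\Br_1(W)=\Br_0(W)$ and no obstruction enters), and Theorem~\ref{cor:ratpointsRC} finishes the proof.
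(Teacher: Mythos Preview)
Your approach is essentially the paper's own: reduce to $\mathrm{rank}(f)=3$ via Corollary~\ref{cor:smallrank}, identify $W$ with a codimension~$\geq 2$ open subset of the punctured affine cone over a smooth projective quadric of dimension~$4$, then apply Lemma~\ref{sa:quadric}(3) and Theorem~\ref{cor:ratpointsRC}; your case-by-case elimination and base-change check of non-degeneracy simply spell out what the paper asserts in one line. One small correction: $\Omega_{F/k}\setminus T$ is \emph{infinite} (by Chebotarev), not cofinite in $\Omega_k$---but that is already enough, since Lemma~\ref{sa:quadric} excludes only finitely many $v_0$, so some admissible $v_0$ lies in $\Omega_{F/k}\setminus T$ and strong approximation off $\{v_0\}$ implies strong approximation off the larger set.
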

\begin{proof} If $rank (f)\leq 2$, it follows from Corollary \ref{cor:smallrank}. Suppose $rank (f)= 3$,  the variety $W$ (see Conjecture \ref{conjecture:1}) is the punctured affine cone over the complement of a closed subset of codimension $2$ in a smooth projective quadric of dimension $4$,
 the proof follows from Theorem \ref{cor:ratpointsRC} and Lemma \ref{sa:quadric}.
\end{proof}

\begin{remark}
	Corollary \ref{cor:smallrank2} had been dealt with (using the descent method)
	in~\cite[Theorem~B]{ctskodescent} under the assumption that the fibers above a Hilbert set of rational points satisfy weak approximation. 
\end{remark}

\section{Under Schinzel's Hypothesis and an unconditional result}

Many results on  rational points are based on Schinzel's hypothesis  (see \cite{{ctsksd98}}, \cite{wittlnm} and \cite{weioneq}).
  Schinzel's hypothesis  implies~ the following hypothesis $(\mathrm{H}_1)$ by \cite[Lemma~7.1]{swdtopics}, which is similar with \cite[Hypothesis $(\mathrm {HH}_1)$]{HW}.

\begin{hypHH}
	Let~$k$ be a number field, $n$ be a positive integer and $P_1,\dots,P_n\in k[\lambda,\mu]$ be irreducible homogeneous polynomials.
	Let~$S$ be a finite subset of $\Omega_k$  containing $\infty_k$,
	large enough that
	for any $v \notin S$, there exists $(\lambda_v,\mu_v) \in \sOint_v\times \sOint_v$ such that all $P_i(\lambda_v,\mu_v) \in \mathcal O_v^\times$.
	Suppose given
	$(\lambda_v,\mu_v)\in k_v \times k_v$ with $(\lambda_v,\mu_v)\neq (0,0)$
	for $v \in S$.
	there exists $(\lambda_0,\mu_0)\in k \times k$, with $\lambda_0$ and $\mu_0$ are integral outside~$S$, such that
	\begin{enumerate}
		\item $[\lambda_0:\mu_0]$ is arbitrarily close to $[\lambda_v:\mu_v]$ in $\Bbb P^1(k_v)$ for each place $v \in S$;
		\item for each $i\in\{1,\dots,n\}$, the element $P_i(\lambda_0,\mu_0)\in k$ is a unit outside~$S$ except at one place, at which it is a uniformizer.
	\end{enumerate}
\end{hypHH}

\begin{remark}\label{HB}
	%(i)
	By the work of Heath-Brown and Moroz (\cite[Theorem 2]{hbm}),
	the hypothesis $(\mathrm{H}_1)$ holds when $k=\Bbb Q$, $n=1$ and $\deg(P_1)=3$, see \cite[Remark 9.7]{HW}.
	%% (ii)
	%% It follows from the work of Green, Tao and Ziegler (\cite{gt1}, \cite{gt2}, \cite{gtz3})
	%% that~$(\mathrm{HH}_1)$
	%% holds when $k=\Q$ and $\deg(P_i)=1$ for all~$i$
	%% (see~\cite[Proposition~1.2]{hsw}).
	%% In comparison, Schinzel's hypothesis~$(\mathrm{H})$ is unknown even for two linear polynomials.
\end{remark}

 Let $P_1(t), \cdots, P_n(t)\in k[t]$ be irreducible polynomials. If the hypothesis $(\rm H_1)$ holds for homogeneous polynomials associated to $P_1(t), \cdots, P_n(t)$, we say that  \emph {  $(\rm H_1)$ holds for $P_1(t), \cdots, P_n(t)$}.

%\begin{hypo}[$H_1$] \label{schinzel} Let $P_1(t), \cdots, P_n(t)$ be irreducible polynomials over a number field $k$. Let $S$ be a finite subset of $\Omega_k$ containing $\infty_k$, all primes above a prime number $l$ with $$l\leq deg (N_{k/\Bbb Q}(\prod_{i=1}^np_i(t)))$$ and all finite primes satisfying the following conditions
%	
%	(i)  $P_i (t) \in \frak o_{k,S} [t]$ for $1\leq i\leq n$.
%	
%	(ii) $P_i(t) \not \equiv 0 \mod v$  for all $v\not\in S$ and $1\leq i\leq n$.
%	
%	
%	If $\lambda_v \in k_v$ for each $v\in S\setminus \infty_k$, there is $\lambda\in \frak o_{k,S}$ close to $\lambda_v$ for $v\in S\setminus \infty_k$, arbitrarily large real part at all archimedean primes of $k$ such that for each $1\leq i\leq n$, $P_i(\lambda)$ is a unit of $k_w$ for all primes $w\not\in S$ except one prime $w_i$, where $P_i(\lambda)$ is a uniformizing parameter. 
%\end{hypo}

\bigskip

Let~$X$ be a smooth and geometrically integral variety over a number field~$k$
and let $f:X \to \mathbb P^1_k$ be a dominant morphism.
Let  $\Br_{vert}(X):= f^*(\Br(k(\eta)))\cap \Br_1(X)$ be the vertical Brauer group of $X$,where $\eta$ is the generic point of $\mathbb P^1_k$.

\begin{prop} \label{sc-bm}  Let~$X$ be a smooth and geometrically integral variety over $k$
	and let
	$f:X \to \mathbb P^1_k$
	be a dominant morphism with geometrically integral generic fiber. Assume that every fiber of $f$ contains an irreducible component of multiplicity 1. Let $U_0, \Psi, P_i(t), D_i,k(D_i),k_i\text { and } L_i$ be as in Theorem \ref{maintheorem}. Let $U'_0\subset U_0$ be an open subset, and $\infty \in U_0$, and let $B\subset \Br(f^{-1}(U'_0))$ be a finite set.
	
	Assume that
	
	(i)  for each $i\in \{1,\cdots,n\}$, $L_i$ is cyclic over $k_i$;
	
	(ii) the hypothesis $({\rm H_1})$  holds for $P_1(t),\cdots, P_n(t)$. 
	
	Then 
	$$ \bigcup_{c\in U'_0(k)} X_c(\mathbf A_k)^{B} \ \ \ \text{is dense in} \ \ \ X(\mathbf A_k)^{B\cap \Br(X)+ \Br_{vert}(X)} $$ where $X_c$ is the fiber of $f$ over $c\in U'_0(k)$. 
\end{prop}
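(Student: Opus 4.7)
The plan is to adapt the Colliot-Th\'el\`ene--Sansuc--Swinnerton-Dyer fibration method under Schinzel's hypothesis (cf.\ \cite{ctsksd98}) to the present setting, in which the generic fibre may carry nontrivial unramified Brauer classes. The cyclicity of $L_i/k_i$ will be exactly what allows us to capture the residues of $B$ at $D_i$ via corestrictions of characters from a cyclic extension $K_i/k_i$ containing $L_i$; hypothesis $(\mathrm{H}_1)$ will then force the Frobenius at the single extra place produced by Schinzel to lie in $\Gal(K_i/L_i)$, which at once secures a local point on the fibre and makes the Brauer--Manin contribution to~$B$ vanish.

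First I would fix, for each $i\in\{1,\dots,n\}$, a finite cyclic extension $K_i/k_i$ with $L_i\subset K_i$ such that the residues at $D_i$ of every element of $B$ vanish in $H^1(k(D_i)K_i,\QQ/\ZZ)$; this is possible because $B$ is finite and $L_i/k_i$ is cyclic. Set
\[ B''=\{\Cor_{k_i/k}(\chi,t-a_i):1\leq i\leq n,\ \chi\in H^1(K_i/k_i,\QQ/\ZZ)\}\subset\Br_1(U_0'), \]
so that $f^*(B'')$ consists of vertical Brauer elements on $f^{-1}(U_0')$ whose extensions to $\Br(X)$ lie in $\Br_{vert}(X)$. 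Starting from $(x_v)_v\in X(\A_k)^{B\cap\Br(X)+\Br_{vert}(X)}$, Harari's formal lemma produces $(x_v')_v\in f^{-1}(U_0')(\A_k)^{B+f^*(B'')}$ arbitrarily close to $(x_v)_v$, with each $x_v'$ lying in a smooth fibre above $V_0$ thanks to the implicit function theorem.

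Next I would enlarge the approximation set $S$ as in items (i)--(v) of the proof of Proposition~\ref{main:1}, so that $B\subset\Br(f^{-1}(\mathcal U_0'))$, $f^*(B'')\subset\Br(\mathcal U_0')$, the extensions $K_i/k$ are unramified outside $S$, and the \'etale covers $\sE_i\to\sD_i^0$ are defined. By Chebotarev, pick $n$ distinct finite places $v_1^*,\dots,v_n^*\notin S$ with $v_i^*$ completely split in $K_i$; fix a place $w_i^*$ of $k_i$ above $v_i^*$ and, using that $L_i$ splits at $w_i^*$, replace $x_{v_i^*}'$ by an integral point above some $t_{v_i^*}\in k_{v_i^*}$ with $\ord_{w_i^*}(t_{v_i^*}-a_i)=1$. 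Setting $S'=S\cup\{v_1^*,\dots,v_n^*\}$, I then apply $(\mathrm{H}_1)$ to the homogenisations of $P_1,\dots,P_n$ to produce $t_0\in k$ arbitrarily close to $f(x_v')$ at every $v\in S'$ and such that, for each $i$, $P_i(t_0)$ is a unit outside $S'$ except at a single place $v_i^\#$ at which $\ord_{v_i^\#}(P_i(t_0))=1$.

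Finally I would construct $(x_v'')_v\in X_{t_0}(\A_k)^B$ place by place. At $v\in S'$ the implicit function theorem yields $x_v''$ close to $x_v'$; at $v\notin S'$ for which $t_0\bmod v$ lies in a split fibre I use Hensel's lemma. At each $v_i^\#$, arguing as in parts a) and b) of the proof of Proposition~\ref{main:1}, the orthogonality of $(x_v')$ to $f^*\Cor_{k_i/k}(\chi,t-a_i)$ for $\chi\in H^1(K_i/k_i,\QQ/\ZZ)$, combined with global reciprocity and the trivial contributions at $v_i^*$ (where $K_i/k_i$ is totally split), forces the Frobenius at $v_i^\#$ to lie in $\Gal(K_i/L_i)$; hence $L_i$ has a degree-$1$ place above $v_i^\#$, and a smooth point of $\sD_i^0\bmod v_i^\#$ lifts to $X_{t_0}(k_{v_i^\#})$ by Hensel. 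The identity $\sum_v\inv_v\beta(x_v'')=0$ for every $\beta\in B$ then follows exactly as in the final computation in the proof of Proposition~\ref{main:1}, via the residue formula \cite[Corollaire~2.4.3]{harariduke}. The hard part of the argument is the delicate balancing at the places $v_i^\#$, where the existence of a local lift and the cancellation of the contribution to the Brauer--Manin sum must be achieved simultaneously; it is precisely the cyclicity hypothesis on $L_i/k_i$---and the resulting construction of $B''$ from characters of $K_i/k_i$---that is designed to deliver both at once.
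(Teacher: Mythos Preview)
There is a genuine gap at the very first step. You claim that for each $i\in\{1,\dots,n\}$ one can choose a \emph{cyclic} extension $K_i/k_i$ containing $L_i$ such that the residues $\partial_{\beta,D_i}\in H^1(k(D_i),\QQ/\ZZ)$ of all $\beta\in B$ vanish in $H^1(k(D_i)K_i,\QQ/\ZZ)$. This would force every $\partial_{\beta,D_i}$ to factor through $\Gal(k(D_i)K_i/k(D_i))\cong\Gal(K_i/L_i)$, i.e.\ to be a \emph{constant} character pulled back along the surjection $\Gal(\overline{k(D_i)}/k(D_i))\to\Gal(\bar k_i/L_i)$. There is no reason whatsoever for this to hold: the residues of unramified Brauer classes of the generic fibre at $D_i$ are arbitrary torsion characters of the function field $k(D_i)$ and in general have a nontrivial geometric part. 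Neither the finiteness of $B$ nor the cyclicity of $L_i/k_i$ helps here. Consequently your set $B''$ of corestrictions from $H^1(K_i/k_i,\QQ/\ZZ)$ cannot detect the full residue, and the ``simultaneous'' conclusion you draw at $v_i^\#$ --- that $\Frob_{w}$ is trivial in $\Gal(K_i/k_i)$ and hence both a local lift exists \emph{and} the Brauer contribution vanishes --- breaks down: even if $K_i$ splits at $w$, the value $\partial_{\beta,D_i}(\Frob_{\zeta_{w,i}})$ need not be zero, since $\Frob_{\zeta_{w,i}}$ lives in $\Gal(E_i/k(D_i))$, not in $\Gal(K_i/L_i)$.

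The paper's proof takes a substantially different route precisely to circumvent this. One works with the genuine abelian extension $E_i/k(D_i)$ killing the residues and lets $K_i$ be the algebraic closure of $L_i$ in $E_i$ (then passes to the Galois closure over $k_i$); here $K_i/k_i$ is only Galois, not abelian. The vertical classes $\Cor_{k_i/k}(\chi,t-a_i)$ with $\chi\in H^1(K_i/k_i,\QQ/\ZZ)$ then show only that the image of $\Frob_{\zeta_{w,i}}$ in $\Gal(K_i/L_i)$ lies in the commutator $[\Gal(K_i/k_i),\Gal(K_i/k_i)]$. The cyclicity of $L_i/k_i$ enters here, and only here: via $H^2(L_i/k_i,\QQ/\ZZ)=0$ one proves $[\Gal(K_i/k_i),\Gal(K_i/k_i)]=\{\rho_i\circ\tau-\tau:\tau\in\Gal(K_i/L_i)\}$ for a generator $\rho_i$ of $\Gal(L_i/k_i)$. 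The paper has meanwhile prepared, for each $i\le n$, an auxiliary set $S_i\subset S$ of places at which $L_i/k$ is totally split and every element of $\Gal(K_i/L_i)$ occurs as a Frobenius; at the single place $\frak p_i\in S_i$ with Frobenius $-\tau_i$ one \emph{replaces} $x'_{\frak p_i}$ by its Galois conjugate $\rho_i(x'_{\frak p_i})$. This swap shifts the running sum of Frobenii by exactly $\rho_i\circ\tau_i-\tau_i$, cancelling the obstruction and landing $\sigma_i$ in $H_i$. In short, the cancellation is not achieved at the Schinzel place itself but by a Galois twist of a local point at a carefully prearranged place inside $S$; this is the mechanism your outline is missing.
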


\begin{proof} 
     For any $(x_v)_v\in X(\mathbf A_k)^{B\cap \Br(X)+ \Br_{vert}(X)}$, since $X$ is smooth and
	geometrically integral over $k$, any $v$-adic neighborhood of
	$x_v\in X(k_v)$ is Zariski dense on $X$, by the implicit function theorem, hence we may assume  that $x_v\in V(k_v)$ is a smooth point by shrinking $x_v$, where $V=f^{-1}(U'_0)$.

Let $\Psi=\{m_1,\cdots,m_n\}$, $\Psi'=U_0\setminus U'_0:=\{m_{n+1},\cdots,m_N\}$, and then $\mathbb P^1_k\setminus U'_0=\{m_1,\cdots, m_N\}$. Let $D_i$ just be the divisor over $m_i$ in our assumption for $1\leq i \leq n$.  For  $n+1 \leq i \leq  N$, let $P_i(t)$ be the irreducible monic polynomial which vanishes at $m_i$, we may choose  $D_i$ to be a geometrically  integral divisor of multiplicity $1$ over $k_i$ since the fiber of $f$ is split at $m_i$. 

For $1\leq i \leq N$,  let $k_i=k[t]/(P_i(t))$ and $a_i$ the class of $t$ in $k_i$;  and we may choose a finite abelian extension $E_i/k(D_i)$ such that the residue of any element of $B$ at $D_i$ vanishes in  $H^1(E_i,\QQ/\ZZ)$.  Let $K_i$ be the algebraic closure of $L_i$ in $E_i$, hence $K_i/L_i$ is also a finite abelian extension. For $i\in\{1,\cdots, n\}$, replace $K_i$ be the Galois closure of $K_i/k_i$, which is also abelian over $L_i$.   For $i\in\{n+1,\cdots, N\}$, since $f$ is split at $m_i$,  we have $L_i=k_i$, hence $K_i/k_i$ is a finite abelian extension. For each $i\in  \{1,\dots, N\}$,  we denote $G_i=\Gal(E_i/k(D_i))$
and $H_i=\Gal(E_iK_i/k(D_i)K_i) \subset G_i$.

	Let $S$ be a finite subset of $\Omega_k$ containing $\infty_k$ and the places at which we want to approximate $(x_v)_v$. We enlarge $S$ such that  we can apply Hypothesis $(\rm H_1)$ for $P_1(t),\cdots, P_n(t)$ and at any place $v\not \in S$, $\prod_{i=1}^N P_i(t)$ is an $\mathcal O_v$-polynomial and separable $\text{ mod } v$, $K_i/k$ is unramified for any $i\in \{1,\cdots,N\}$; moreover, we may assume:

\begin{enumerate}[i)]

	\item 	The morphism $f: X\rightarrow \Bbb P_k^1$, the open immersions $U_0\hookrightarrow \Bbb A_k^1$, $U'_0\hookrightarrow \Bbb A_k^1$ and $f^{-1}(U'_0)\hookrightarrow X$  extends to morphisms of  their corresponding integral models $\mathcal X, \Bbb P_{\mathcal O_S}^1,\mathcal U_0$ and $ \mathcal U'_0$ over $\mathcal O_S$.  
	
	\item
	For each~$i\in \{1,\cdots,N\}$, let $D_i^0$ be the  complement  of  $D_i$ by another components of $f$ over $m_i$;  let~$\sD_i$ be the reduced closed subscheme defined by the Zariski closures of~$D_i$ in~$\sX$, let $\sD_i^0$ be the open subscheme of $\sD_i$ associated with $D_i^0$, and let $\sE_i$ be
	the normalization of~$\sD_i^0$ in $E_i/k(D_i)$.
	We choose $\sD_i^0$ to be small enough such that $\sD_i^0$ is smooth over $\mathcal O_S$ and $\sE_i$ is finite and \'etale over~$\sD_i^0$.

	\item At any place $v\not \in S$, there exist smooth local points in the fiber of~$f$ over any closed point of~$\sU'_0$.  For any $1\leq i\leq N$ and any place~$w$ of~$k_i$ which is not above $S$, if~$L_i$ possesses a place of degree~$1$ above~$w$ then there exist smooth rational points in the closed fibers of $\sD_i^0 \to \mtilde_i$ over the rational point corresponding $w$, where $\mtilde_i$ is  the Zariski closure of $m_i$ in $\Bbb P^1_{\sOint_S}$. These are possible by the Lang--Weil  estimates \cite{langweil};

	\item For any $1\leq i\leq N$, any place~$w$ of~$k_i$ which does not lie above $S$ and splits completely in~$L_i$ 
	and any element $\sigma$ of~$H_i$, there exists a rational point
	in the fiber of $\sD_i^0 \to \mtilde_i$ above the closed point corresponding to~$w$, such that $\sigma $ is equal to the Frobenius automorphism of the \'etale cover $\sE_i\to\sD_i^0$
	at this rational point. This is possible by the geometric Chebotarev's density theorem (see \cite[Lemma~1.2]{ekedahl}).
\end{enumerate}

 The following  $S_i$ play \emph{key role} in the proof, they ensure that we can control values of the fiber's Brauer group at points close to non-split fibers. 
 
 By the Chebotarev density theorem, for $i\in \{1,\cdots,n\}$, there is a finite set $T_i$ of places in $L_i$ of degree 1 over $k$ with $|\Gal(K_i/L_i)|$ distinct elements such that 
	
	(1)   for each $\sigma \in \Gal(K_i/L_i)$, there are exactly one element $\frak q\in T_i$ with sufficiently large cardinalities of the residue fields and $Frob(\frak q) =\sigma$;
	
	(2)  the set $S_i =\{ \frak q \cap k: \ \frak q \in T_i\}$ has $|\Gal(K_i/L_i)|$ distinct elements with $S\cap S_i=\emptyset$ and $S_i \cap S_j=\emptyset $ for $i\neq j$.
	
	For each $v\in S_i$ with $i\in \{1, \cdots,n\}$,
	we also fix a place~$w$ of~$k_i$ lying over~$v$
	and an element $t_v \in k_v$
	such that $\ord_w(t_v-a_i)=1$.
	
	For $i\in \{1,\cdots,n\}$,	let $v\in S_i$. There is one $\frak q\in T_i$ with $v=\frak q\cap k\in S_i$. Since $\frak q$ is of degree 1 over $k$,  it implies $(L_i)_\frak q=k_v$. Since $L_i/k_i$ is cyclic (hence Galois), $L_i$ is totally split at $\frak q$. Therefore, for the place $v$, the scheme $\mathcal D_{i}\times_{\mathcal O_S} \mathcal O_v$ splits into 
	geometrically integral components $\{ \sigma\mathcal D_{i, v}: \sigma\in \Gal(L_i/k_i)\} $, where $\mathcal D_{i, v}$ is a fixed geometrically irreducible component for $v\in S_i$ with $1\leq i\leq n$. 
%	Since the polynomial $P_i(t)$ has a root over $\mathcal O_v$ for each $v\in S_i$, 
	Since $\ord_w(t_v-a_i)=1$,  one obtains $t_v\in \mathcal U'_0(\mathcal O_v)$ with $\ord_v(P_i(t_v))=1$ such that there is $x''_v\in \mathcal X_{t_v}(\mathcal O_v)$ satisfying   $$ x''_v \mod v \ \ \in   \mathcal D_{i}^0(k(v)). $$
	We replace $x_v$ by $x''_v$ for each $v\in S_i, 1\leqslant i\leqslant n$.

	Recall that  $a_i$ is the image of $t$ in $k[t]/(P_i(t))=k_i$. Let \begin{align*}
	B'&=\{\Cor_{k_i/k}(t-a_i  ,\chi): 1\leqslant i \leqslant n,\chi\in H^1(K_i/k_i,\QQ/\ZZ)\},\\
	B''&=\{\Cor_{k_i/k}(t-a_i  ,\chi): n+1\leqslant i \leqslant N,\chi\in H^1(K_i/k_i,\QQ/\ZZ)\}.
	\end{align*}
Obviously $(B'\cup B'')\subset \Br_{vert}(f^{-1}(U'_0))$. Let $A$ be the finite subgroups of $\Br(X)$ generated by $B,B'$ and $B''$.   

After enlarging $S$,  we may  assume that $S\supset \bigcup_{i=1}^n S_i$ and the order of $A$ is invertible at $v\not \in S$. Thus, according to (\ref{Har}), by Harari's formal lemma (see \cite[Theorem 1.4]{ctbudapest}, \cite{harariduke}),
there exists $(x''_v)_v \in f^{-1}(U'_0)({\bf A}_k)^A$
 arbitrarily close to $(x_v)_v$ in~$X({\bf A}_k)$. Thus, each $x''_v$ belongs to a smooth fiber of $f$ for each $v$. We enlarge $S$ such that $A \subset \Br(f^{-1}(\mathcal U'_0))$ and that $\beta(x''_v)=0$ for 
all $v\not \in S$ and for all $\beta \in A$; hence \begin{equation}\label{hara}
\sum_{v\in S}\beta(x''_v)=0.
\end{equation}

	Similarly as in Proposition \ref{main:1},  
we may choose $N$ distinct places $v'_{1},\dots,v'_N\not \in S$, such that~$v'_i$ splits completely in~$K_i$ for each~$i$ by Chebotarev's density theorem.
For each place $v'_i$,
we fix a place~$w'_i$ of~$k_i$ above $v'_i$
and $t_{v'_i} \in k_{v'_i}$
satisfying $\ord_{w'_i}(t_{v'_i}-a_i)=1$. For any $v\in \{v'_{1},\dots,v'_N\}$, we may assume that $f(x''_v)=t_v$.

	For $i\in \{1,2,\cdots,n\}$, let $P_i(\lambda,\mu)$ be the corresponding homogeneous polynomial of $P_i(t)$. By the Hypothesis $(\mathrm {H_1})$, there is $(\lambda_0,\mu_0)\in \mathcal O^2_S$ such that $[\lambda_0:\mu_0]$ is very close to $f(x''_v) \in \Bbb P^1(k_v)$ for $v\in S$ and   $\ord_{v}(P_i(\lambda_0,\mu_0))=0$ for all $v\not\in S$ except one prime $v_i$ of $k$ with
	 \begin{equation}\label{equ:HH1}
       \ord_{v_i}(P_i(\lambda_0,\mu_0))=1 \text{ for } 1\leq i\leq n.
	 \end{equation}
	  We may assume that $\mu_0\neq 0$, then let $t_0:=\lambda_0/\mu_0$.
	
	We may assume that $X_{t_0}$ is smooth.
	For each $v \in S$,
	we can choose $x'_v \in X_{t_0}(k_v)$ arbitrarily close to~$x''_v$ by the implicit  function theorem.

	In the following we will construct~$x'_v$ for $v \not \in S$.
	
 For $v \not \in S$,  let $\widetilde{t_0}$ be the closed closure of the rational point $[u_0:v_0]$ in $\Bbb P^1_{\mathcal O_S}$.	
For $1\leq i \leq N$, let $\Lambda_i=\{v\not \in S: \widetilde{t}_0 \text{ mod } v \in \mtilde_i  \text{ mod } v\}$.
Obviously, all $\Lambda_i$ are finite and pairwise disjoint. For any $i\in \{1,\cdots, n\}$, by the choice of $t_0$, we have $\Lambda_i=\{v'_i\} \text{ or } \{v_i, v'_i\}$.	If $v \in \Lambda_i$, we denote  $w$  to be the unique place of $k_i$ above $v$ corresponding to $t_0$,
then one has $\ord_w(t_0-a_i)>0$.  Since $t_0$ is very close  to~$t_{v'_i}$,
we may assume that $\ord_{w'_i}(t_0-a_i)=1$, hence $v'_i \in \Lambda_i$. 	
	
\begin{lemma} \label{split} 
		Suppose $i\in \{1,\cdots, n\}$. If $\ord_{v_i}(\mu_0)>0$, then $\Lambda_i=\{v'_i\}$; if $\ord_{v_i}(\mu_0)=0$, then $\Lambda_i=\{v_i, v'_i\}$ and $K_i/k_i$ is totally split at the unique place of $k_i$ above $v_i$ corresponding to $t_0$. 
\end{lemma}
\begin{proof}
	 For any $i\in \{1,\cdots, n\}$, we Write $$P_i(t)=\prod_{j=1}^g Q_j(t)$$ where $Q_j(t)$ are irreducible polynomials over $\mathcal O_{v_i}$ for $1\leq j\leq g$.  There are $g$ primes $w_1, \cdots, w_g$ of $k_i$ above $v_i$ corresponding to irreducible polynomials $Q_1(t), \cdots, Q_g(t)$ respectively. Let $Q_j(\lambda,\mu)$ be the corresponding homogeneous polynomial of $Q_j(t)$. Since $\ord_{v_i}(P_i(\lambda_0,\mu_0))=1$ by (\ref{equ:HH1}), there is $1\leq j_0\leq g$ such that 
	\begin{equation} \label{spr} \ord_{v_i}(Q_{j_0}(\lambda_0,\mu_0))=1 \ \ \ \text{and} \ \ \ \ord_{v_i}(Q_j(\lambda_0,\mu_0))=0 \ \ \ \text{for all $j\neq j_0$.} \end{equation} 
	
	If $\ord_{v_i}(\mu_0)>0$, then $\ord_{v_i}(P_i(t_0)) \leq 0$, hence $[\lambda_0: \mu_0] \text{ mod } v\not \in \mtilde_i \text{ mod } v$. Therefore, $\Lambda_i=\{v'_i\} $.
%	 By iii), 	we can choose $x'_{v_i} \in X_{t_0}(k_v)$.
	
	We suppose $\ord_{v_i}(\mu_0)=0$, then $\ord_{v_i}(Q_{j_0}(t_0))=1$,
	hence $Q_{j_0}(t)$ is of degree 1 and $w_{j_0}$ is a place of degree 1 over $v_i$. Therefore,  $\Lambda_i=\{v_i, v'_i\}$ and $w_{j_0}$ is the unique place of  $k_i$ above $v_i$ corresponding to $t_0$.

 For any $\Cor_{k_i/k} (\chi, t-a_i) \in B'$, we have  
	$$ 0= \sum_{v\in \Omega_k} \Cor_{k_i/k} (\chi, t_0-a_i)_v = \Cor_{k_i/k} (\chi,  t_0-a_i )_{v_i} +\sum_{v\in S} \Cor_{k_i/k} (\chi, t_0-a_i)_v $$ by the global class field theory  and (\ref{hara}),  one obtains
	$$\sum_{j=1}^g (\chi, t_0-a_i)_{w_j}=- \sum_{v\in S} \Cor_{k_i/k}(\chi, t_v-a_i)_v =0.$$ 
	By (\ref{spr}), one obtains that $$(\chi, t_0-a_i)_{w_{j_0}}=0 \ \ \text{ for all $\chi\in H^1(K_i/k_i, \Bbb Q/\Bbb Z)$. } $$  
	Since $K_i/k_i$ is abelian, one concludes that $w_{j_0}$ splits totally  in $K_i/k_i$. 
\end{proof}

	For each $v \not \in S$ and $v$ does not belong to any~$\Lambda_i$,
	by $iii)$ and  Hensel's lemma, we can choose a smooth $k_v$-point $x'_v$ in $X_{t_0}$.

	If $v\in \Lambda_i\setminus \{v'_i\}$, since $L_i$ possesses a place of degree $1$ over $v$ by Lemma \ref{split}, 
	we can choose a 
	$k(v)$-point~$\zeta_{w,i}$ of the fiber of $\sD_i^0 \to \mtilde_i$ over $t_0 \text{ mod } v$, then we may lift it to a $k_v$-point~$x'_v$ of $X_{t_0}$. Therefore, we only need to construct $x'_v$ for the remaining places $v'_1,\cdots, v'_N$.

%	For any $1\leq i \leq N$ and any $v \in \Lambda_i \setminus \{v_i\}$,
%	
%	 
%	 By $ii)$, 
%	we can choose a rational
%	point~$\zeta_{w,i}$ of the fiber of $\sD_i^0 \to \mtilde_i$ above~$\bar t_0$, 
%	then we may lift it to a $k_v$-point~$x'_v$ of $X_{t_0}$.
	
%	There is $$x'_{v_i} \in \mathcal X_{t_0}(\mathcal O_{v_i}) \ \ \text{ such that } \ \  
%	x'_{v_i} \mod v_i \in \mathcal D^{0}_{i}(k(v_i)) \subset  \mathcal X_{t_0}'(k(v_i)) $$
%	by Hensel's lemma for $1\leq i\leq n$. 

%     For $v\in \Lambda_i$, we denote  $w$  to be the unique place of $k_i$ above $v$ corresponding to $t_0$, then one has $\ord_w(t_0-a_i)>0$.
%	Thus  $t_0\in \mathcal O_v$ for any $v\in \Lambda_i$. By $ii)$, 
%	we can choose a rational
%	point~$\zeta_{w,i}$ of the fiber of $\sD_i^0 \to \mtilde_i$ above~$\bar t_0$, 
%	then we may lift it to a $k_v$-point~$x'_v$ of $X_{t_0}$.
	
%	For $1\leq i \leq N$,  let $\Lambda_i$ be a subset of $\Omega_k\setminus S$ such that $\tilde c \in \mtilde_i  \mod v$. Obviously, all $\Lambda_i$ are finite and pairwise disjoint. For $v\in \Lambda_i$, we denote  $w$  to be the unique place of $k_i$ above $v$ corresponding to $c$, then one has $\ord_w(t_0-a_i)>0$. 
%	Since $t_0$ is very close to~$t_{v_i}$,
%	it implies $\ord_{w_i}(t_0-a_i)=1$, hence $v_i \in \Lambda_i$.

%	When $v \in \Lambda_i$, we may view~$w$ as a place of~$k_i$ dividing~$v$;
%	we then have $w(t_0-a_i)>0$. 
%	We may assume that $w_i(t_0-a_i)=1$,
%	and hence that $v'_i \in \Lambda_i$,
%	by choosing~$t_0$ close enough to~$t_{v'_i}$. 

	For $1\leq i \leq N$
	and $v \in \Lambda_i$, let $n_{w,i} = \ord_w(t_0-a_i)$, and
    let $\Frob_{\zeta_{w,i}}$ denote the Frobenius automorphism of
	the \'etale cover $\sE_i\to\sD_i^0$
	at~$\zeta_{w,i}$. For any point $z_v\in \mathcal X(\mathcal O_v)$ with $v\not \in S$, let $\bar z_v$ be the $k(v)$-point defined by $z_v \text{ mod } v$. 
	
By the following two lemmas, for $i\in\{1,\dots,n\}$ (non-split fibers), we may modify the local point $x'_{\frak p_i} $ by its Galois conjugation $z_{\frak p_i}$ at a place $\frak p_i\in S_i$, such that values of the fiber's Brauer group at points close to non-split fibers can be similarly controlled as close to split fibers.
	
	\begin{lemma}\label{lem:isinhi2} 
	\begin{enumerate}
		\item For each $i \in \{1,\dots,n\}$,  if $\Lambda_i=\{ v'_i\}$, let $\sigma_i=1_{G_i}$; if $\Lambda_i=\{v_i, v'_i\}$,
		there exists $\frak p_i\in S_i$ and  $z_{\frak p_i}\in \mathcal X'_{t_0}(\mathcal O_{\frak p_i})$ such that 
		\begin{equation}\label{sigmai}
		\sigma_i:=\Frob_{\zeta_{w,i}}- \Frob_{\bar x'_{\frak p_i}}+ \Frob_{\bar z_{\frak p_i}}\in H_i.
		\end{equation}
		\item For each $i \in \{n+1,\dots,N\}$, let	$\sigma_i=\sum_{v \in \Lambda_i \setminus \{v'_i\}} n_{w,i} \mkern2mu\Frob_{\zeta_{w,i}}$,
		we have $\sigma_i \in H_i$.
	\end{enumerate}
	\end{lemma}
    \begin{proof}  
    	Suppose $i \in \{n+1,\dots,N\}$. This proof is classic and similar as in Proposition \ref{maintheorem}.  For arbitrary character $\chi\in H^1(K_i/k_i,\QQ/\ZZ)$. Let $\gamma=\Cor_{k_i/k}(\chi, t-a_i)$, then $\gamma\in B''$.
%    	Since  $\sum_{v \in \Omega} \inv_v \gamma(f(x''_v))=0$ and since $\gamma(f(x''_v))=0$ for $v \not \in S$, 
%    	one has $\sum_{v\in S} \inv_v \gamma(f(x''_v))=0$. 
    	Since $t_0$ is arbitrarily close to~$f(x''_v)$ for $v \in S$,
    	we have \begin{equation}\label{vanish4}
    	\sum_{v\in S} \inv_v \gamma(t_0)=0,
    	\end{equation}
    	 by (\ref{hara}).
    	By the global reciprocity law and (\ref{vanish4}), we deduce $\sum_{v\not \in S} \inv_v \gamma(t_0)=0.$

    	Moreover, we have $\inv_v \gamma(t_0)=0$
    	for $v \not \in  S \cup \Lambda_i$.  	If $v=v'_i$, by our choice, we have $K_i/k_i$ is totally split, hence $\inv_{v'_i} \gamma(t_0)=0$. 
    	 Hence, \begin{equation}\label{vanish5}
    	\sum_{v\in\Lambda_i\setminus\{v'_i\}} \inv_v \gamma(t_0)=0.
    	\end{equation}
    	For $v \in \Lambda_i$, by \cite[Corollary 2.4.3]{harariduke}, 
    	we have
    	\begin{align}
    	\inv_v \gamma(t_0)=n_{w\mkern-1mu,\mkern1mui} \mkern2mu\chi(\Frob_w)\rlap{\text{,}}
    	\end{align}
    	where $\Frob_w \in \Gal(K_i/k_i)$ denotes the Frobenius at~$w$,
    	since $\gamma \in \Br(\sU)$. 
    	By (\ref{vanish5}),
    	one obtains
    	\begin{align*}
    	\chi(\sigma_i)=\sum_{v \in \Lambda_i\setminus \{v_i\}}
    	n_{w\mkern-1mu,\mkern1mui} \mkern2mu\chi(\Frob_w)=\sum_{v\in\Lambda_i\setminus \{v_i\}} \inv_v \gamma(t_0)=0\rlap{\text{.}}
    	\end{align*}
    	Note that $\chi$ can run through $H^1(K_i/k_i,\QQ/\ZZ)$,  hence  $\sigma_i\in H_i$.

    Suppose $i \in \{1,\dots,n\}$. 	We have $\Lambda_i =\{v'_i\} \text{ or } \{v_i, v'_i\}$. Suppose $\Lambda_i =\{v'_i\}$, $(1)$ obviously holds.  
    
    Now we assume $\Lambda_i=\{v_i,v'_i\}$, hence $t_0\in \mathcal O_{v_i}$. 
    By \cite[Corollary 2.4.3]{harariduke}	and the choice of $x'_{v_i}$, we have $$ \gamma(x'_{v_i}) =\partial_{\gamma,D_i} (\Frob_{\zeta_{w,i}}) $$
    for all $\gamma=\Cor_{k_i/k}(\chi, t-a_i)\in B'$.
    By global class field theory, we
    $$ \sum_{v\in \Omega_k} \gamma(x'_{v_i}) = \sum_{v\in \Omega_k} \Cor_{k_i/k}(\chi, t_0-a_i)_v=0  .$$  One obtains $$\gamma(x'_{v_i})=-\sum_{v\in S}\gamma(x'_{v_i})=0,$$ hence
    $$\chi (\Frob_{\zeta_{w,i}})=0 \ \ \ \text{for all} \ \chi \in  H^1(K_i/k_i, \Bbb Q/\Bbb Z).$$  
    For any $\sigma\in G_i$, denote $\overline \sigma$ to be its image in $\Gal(k(D_i)K_i/k(D_i))\cong \Gal(K_i/L_i)$.
    This implies that $\overline \Frob_{\zeta_{w,i}}\in [\Gal(K_i/k_i), \Gal(K_i/k_i)]$, the commutator of $\Gal(K_i/k_i)$. Denote  
    $$ [\Gal(L_i/k_i), \Gal(K_i/L_i)]:= \{ \sum_{\rho \in \Gal(L_i/k_i)} (\rho \circ \sigma_{\rho}  - \sigma_{\rho}): \sigma_\rho \in \Gal(K_i/L_i) \}, $$
    where $\rho \circ \sigma_{\rho}$ is the conjugate action of $\sigma_{\rho}$ by $\rho$.
    Then $$[\Gal(L_i/k_i), \Gal(K_i/L_i)] \subseteq  [\Gal(K_i/k_i), \Gal(K_i/k_i)]\subseteq \Gal(K_i/L_i) .$$ 
    Since $\Gal(L_i/k_i)$ is cyclic, one has $$H^2(L_i/k_i, \Bbb Q/\Bbb Z)=H^3(L_i/k_i, \Bbb Z)=H^1(L_i/k_i, \Bbb Z)=0 .$$
    Therefore, the restriction map 
    $$ H^1(K_i/k_i, \Bbb Q/\Bbb Z) \rightarrow H^1(K_i/L_i, \Bbb Q/\Bbb Z)^{\Gal(L_i/k_i)}  $$ is surjective. Hence the natural map
    $$ \Gal(K_i/L_i)/ [\Gal(L_i/k_i), \Gal(K_i/L_i)]  \rightarrow \Gal(K_i/k_i) /[\Gal(K_i/k_i), \Gal(K_i/k_i)] $$ is injective by the duality. One concludes that $$[\Gal(L_i/k_i), \Gal(K_i/L_i)] = [\Gal(K_i/k_i), \Gal(K_i/k_i)].$$ 
Since $\Gal(L_i/k_i)$ is cyclic, we have $$[\Gal(L_i/k_i), \Gal(K_i/L_i)]=\{\rho_i \circ \tau -\tau : \tau \in \Gal(K_i/L_i)\}$$ by an easy computation, where $\rho_i$ is a generator of $\Gal(L_i/k_i)$. 
%The reason is  $$(\gamma \circ \sigma_1) \sigma_1^{-1} \cdot (\gamma \circ \sigma_2) \sigma_1^{-1}=\gamma\circ (\sigma_1 \sigma_2)\cdot(\sigma_1 \sigma_2)^{-1},$$
% $$(\gamma^2 \circ \sigma) \sigma^{-2} =\gamma\circ (\gamma \circ \sigma \cdot \sigma)\sigma^{-1} ).$$
    Then 
    \begin{equation} \label{kill}  \overline \Frob_{\zeta_{w,i}}= \rho_i \circ \tau_i- \tau_i  \ \ \text{for some} \ \ \tau_i\in \Gal(K_i/L_i) . \end{equation} 
    
    For $\tau_i$ in (\ref{kill}), there is $\frak q_i\in T_i$ such that $$-\tau_i = Frob(\frak q_i) \ \ \ \text{ and } \ \ \ (L_i)_{\frak q_i} = k_{\frak p_i}  \ \text{ with } \ \frak p_i=\frak q_i\cap k. $$
%    Since $\ord_\frak p(P_i(t_0))=1$,
%    there is $$y_{\frak p}' \in \mathcal X_{t_0}'(\frak o_{k_{\frak p}})\subset \mathcal X_{t_0}(\frak o_{k_{\frak p}}) \ \ \text{ such that } \ \  
%    y_{\frak p}' \mod \frak p \in \gamma_i \mathcal D^{sm}_{i,\frak p}(k(\frak p)) \subset  \mathcal X_{t_0}'(k(\frak p)) $$
%    by Hensel's lemma and (a) and (c) for $1\leq i\leq n$. 

   For $v=\frak p_i$, define 
    $ z_{\frak p_i} = \rho_i( x'_{\frak p_i}) \in X_{t_0}(k_{v_i})$. Then $\Frob_{\bar z_{\frak p_i}} =-\rho_i \circ \tau_i.$
    
    We have 
    $$\overline \sigma_i=\overline\Frob_{\zeta_{w,i}}- \overline \Frob_{\bar x'_{\frak p_i}}+ \overline \Frob_{\bar z_{\frak p_i}} =\rho_i \circ \tau_i -\tau_i+ \tau_i - \rho_i \circ \tau_i=0_{\Gal(K_i/L_i)},$$
    hence $\sigma_i\in H_i$. \qedhere

   % Then $(z_v)_{v\in \Omega_k} \in X_{t_0} (\mathbf A_k) \cap W$ by (d). 
    \end{proof}

    One has  
\begin{equation} \label{twist-value} \beta(z_{\frak p_i})=\beta(\gamma_i( x'_{\frak p_i}))=  -\partial_{\beta,D_i} (\gamma_i \circ \tau_i) \end{equation}
and 
\begin{equation} \label{br-value}  \beta(x'_{\frak p_i})=- \partial_{\beta,D_i} (\tau_i ) \end{equation}
for all $\beta\in B$.

Let $I=\{i:1\leq i \leq n \text{ and } \Lambda_i=\{v_i, v'_i\}\} $ and let $S'=\{\frak p_i: i\in I\}\subset S$. 
\begin{lemma}\label{br-tr} 
For any $\beta\in B$, we have $$\sum_{v\not\in  S'\cup \{ v'_1,\cdots, v'_N\}}\beta(x'_v)+ \sum_{v\in  S'}\beta(z_v) =\sum_{i=1}^{N}\partial_{\beta,D_i}(\sigma_i) .$$
\end{lemma}
\begin{proof} We have $$\sum_{v\in S\setminus S'}\beta(x'_v)=\sum_{v\in S}\beta(x'_v)-\sum_{v\in S'}\beta(x'_v)=-\sum_{v\in S'}\beta(x'_v).$$ 
Since $\beta(x'_v)=0$ for any $v\not \in \bigcup_i {\Lambda_i}\bigcup S$, one obtains
	\begin{align*}
	&\sum_{v\not\in  S'\cup \{ v'_1,\cdots, v'_N\}}\beta(x'_v)+ \sum_{v\in  S'}\beta(z_v) =\sum_{v\in S\setminus S'}\beta(x'_v)+\sum_{i=1}^N \sum_{v\in \Lambda_i\setminus \{v'_i\}}\beta(x'_v)+ \sum_{v\in  S'}\beta(z_v) \\
	&=-\sum_{v\in  S'}\beta(x'_v)+\sum_{i\in I} \partial_{\beta,D_i} (\Frob_{\zeta_{w,i}})+\sum_{i=n+1}^{N}\partial_{\beta,D_i}(\sigma_i)-\sum_{i\in I}\partial_{\beta,D_i} (\gamma\circ \tau_i)\\
	&=\sum_{i\in I}\partial_{\beta,D_i}(\tau_i)+\sum_{i\in I} \partial_{\beta,D_i} (\Frob_{\zeta_{w,i}})+\sum_{i=n+1}^{N}\partial_{\beta,D_i}(\sigma_i)-\sum_{i\in I}\partial_{\beta,D_i} (\gamma\circ \tau_i)\\
	&= \sum_{i\in I}\partial_{\beta,D_i}(\sigma_i)+\sum_{i=n+1}^{N}\partial_{\beta,D_i}(\sigma_i)\\
	&=\sum_{i=1}^{N}\partial_{\beta,D_i}(\sigma_i).
	\end{align*}
	by (\ref{sigmai}),(\ref{twist-value}) and (\ref{br-value}).
\end{proof}

For any $i \in I$, we replace $x'_{\frak p_i}$ by $z_{\frak p_i}$, we also denote it by $x'_{\frak p_i}$.	
	For each $i\in \{1,\dots,N\}$, by Lemma~\ref{lem:isinhi2} and \ref{br-tr}, we can choose 
	a rational point~$\zeta_{w'_i,i}$ of the fiber of $\sD_i^0\to \mtilde_i$ above~$w'_i$
	such that $\Frob_{\zeta_{w'_i,i}}=-\sigma_i$. Therefore
	\begin{align*}
	\sum_{v \in \Lambda_i} n_{w,i} \mkern2mu\Frob_{\zeta_{w,i}}=0
	\end{align*}
	in~$G_i$.
By Hensel's Lemma, we can lift~$\zeta_{w'_i,i}$ to a $k_{v'_i}$-point~$x'_{v'_i}$ of~$X_{t_0}$. By a similar argument as in the last paragraph of the proof in Proposition \ref{main:1}, we conclude that $\sum_{v \in \Omega}\inv_v \beta(x'_v)=0$ for all $\beta \in B$. \qedhere
	
%	We have thus constructed an adelic point $(x'_v)_{v \in \Omega} \in X_c(\mathbf A_k)$
%	arbitrarily close to~$(x_v)_{v \in \Omega}$ in $X(\mathbf A_k)$.
%	It remains to check that $(x'_v)_{v \in \Omega}$ is orthogonal to~$B$ with respect to the Brauer--Manin pairing.
%	By the orthogonality of~$(x''_v)_{v \in \Omega}$ to~$B$ and by the definition of~$S$,
%	we have $\sum_{v \in S} \inv_v \beta(x''_v)=0$ for all $\beta\in B$.
%	On the other hand,
%	as~$x'_v$ is arbitrarily close to~$x''_v$ for $v \in S$, we have $\beta(x''_v)=\beta(x'_v)$ for all $v \in S$ and all $\beta\in B$,
%	so that $\sum_{v \in S} \inv_v \beta(x'_v)=0$ for all $\beta\in B$.
%	For $v \in \Omega\setminus (S \cup \Lambda_1 \cup \dots \cup \Lambda_N)$
%	we have $\beta(x'_v)=0$ for all $\beta\in B$ since $B \subseteq \Br(f^{-1}(\sU))$.
%	Thus
%	\begin{align}
%	\sum_{v \in\Omega} \inv_v \beta(x'_v) = \sum_{i=1}^N \sum_{v \in \Lambda_i} \inv_v \beta(x'_v)
%	\end{align}
%	for all $\beta \in B$.
%	Now,
%	as $\beta \in \Br(f^{-1}(\sU))$,
%	we have $\inv_v\beta(x'_v)=n_{w,i} \mkern2mu\partial_{\beta,D_i}(\Frob_{\zeta_{w,i}})$
%	for any $\beta \in B$,
%	any $i\in \{1,\dots,N\}$
%	and any $v \in \Lambda_i$,
%	if $\partial_{\beta,D_i} \in \Hom(G_i,\QQ/\ZZ)\subset H^1(k(D_i),\QQ/\ZZ)$
%	denotes the residue of~$\beta$ at the generic point of~$D_i$
%	(see~\cite[Corollaire~2.4.3]{harariduke}).
%	In view of~\eqref{eq:sumfrob}, 
%	we conclude that $\sum_{v \in \Omega}\inv_v \beta(x'_v)=0$ for all $\beta \in B$.
\end{proof}

\begin{theorem} \label{thm:cyclic}  Let~$X$ be a smooth, proper and geometrically integral variety over  $k$ and let
	$f:X \to \mathbb P^1_k$
	be a dominant morphism with geometrically integral generic fiber $X_{\bar\eta}$.  Let $P_i(t), k_i,L_i$ be as in Theorem \ref{maintheorem}. 
	
	Assume that
	\begin{enumerate}	
	\item $H^1(X_{\bar\eta},\Bbb Q/\Bbb Z)=0$ and $H^2(X_{\bar\eta},\mathscr O_{X_{\bar\eta}})=0$ (e.g., $X_{\bar \eta}$ is rationally connected);	
		
	\item for $i\in \{1,\cdots, n\}$, $L_i$ is a cyclic extension of $k_i$;
	
	\item the hypothesis $({\rm H_1})$  holds for $P_1(t),\cdots, P_n(t)$;
	
	\item there exists a Hilbert subset $H \subset \Bbb P^1_k$ such that
	$X_c(k)$ is dense in $X_c(\mathbf A_k)^{\Br(X_c)}$
	for every rational point~$c$ of~$H$.

	\end{enumerate}	
 Then $X(k)$ is dense in~$X(\mathbf A_k)^{\Br(X)}$.
\end{theorem}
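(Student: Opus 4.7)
The plan is to mimic the proof of Theorem \ref{cor:ratpointsRC}, but to feed Proposition \ref{sc-bm} in place of Proposition \ref{main:1}. The new hypotheses (2) and (3) in the statement are precisely what Proposition \ref{sc-bm} needs, so no fresh geometric input is required at the fibration step; assumption (1) plays the role that rational connectedness of the geometric generic fiber played before.

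First, from $H^1(X_{\bar\eta},\mathbb Q/\mathbb Z)=0$ and $H^2(X_{\bar\eta},\mathscr O_{X_{\bar\eta}})=0$, together with \cite[Lemma 8.6]{HW}, I obtain that $\Br(X_\eta)/f_\eta^*\Br(\eta)$ is finite. After shrinking the open set $U_0\subset\mathbb P^1_k$ over which $f$ is split (keeping $\infty\in U_0$), I can pick a finite subgroup $B\subset\Br(f^{-1}(U_0))$ whose image surjects onto $\Br(X_\eta)/f_\eta^*\Br(\eta)$. By \cite[Proposition 4.1]{HW} there exists a Hilbert subset $H'\subset U_0$ such that the specialization map $B\to\Br(X_c)/f_c^*\Br(k)$ is surjective for every $c\in H'(k)$.

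Next, since $B\cap\Br(X)\subset\Br(X)$ and $\Br_{vert}(X)\subset\Br(X)$, every $(x_v)_v\in X(\mathbf A_k)^{\Br(X)}$ automatically lies in $X(\mathbf A_k)^{B\cap\Br(X)+\Br_{vert}(X)}$. Applying Proposition \ref{sc-bm} with $U'_0:=U_0$ yields, for any prescribed finite set of places and any approximation tolerance, an element $c\in U_0(k)$ and an adelic point $(x'_v)_v\in X_c(\mathbf A_k)^B$ arbitrarily close to $(x_v)_v$ in $X(\mathbf A_k)$. Following the proof of Theorem \ref{cor:ratpointsRC} and using Ekedahl's version of Hilbert's irreducibility theorem \cite{ekedahl}, I can furthermore require $c\in U_0(k)\cap H\cap H'$, where $H$ is the Hilbert subset supplied by assumption (4).

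The remaining verification is that $(x'_v)_v\in X_c(\mathbf A_k)^{\Br(X_c)}$: for any $\alpha\in\Br(X_c)$, surjectivity at $c\in H'(k)$ produces $\beta\in B$ with $\alpha-\beta|_{X_c}\in f_c^*\Br(k)$; the pairing with the $f_c^*\Br(k)$ part vanishes by global class field theory, while the pairing of $\beta$ with $(x'_v)_v$ vanishes by construction. Assumption (4) then supplies a rational point of $X_c$, hence of $X$, arbitrarily close to $(x'_v)_v$, and therefore arbitrarily close to the original $(x_v)_v$. The hard part of the whole argument is already contained in Proposition \ref{sc-bm}: controlling the Brauer--Manin obstruction at those places $v$ for which $c\bmod v$ meets a non-split fiber, where the cyclicity of $L_i/k_i$ is exploited to modify a local point by Galois conjugation at an auxiliary place $\mathfrak p_i\in S_i$ in order to annihilate the residue $\partial_{\beta,D_i}$, and where Hypothesis $(\mathrm H_1)$ is used to produce the global base point $t_0$. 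Granting that proposition, the present theorem is a formal consequence of the Hilbert irreducibility step just sketched.
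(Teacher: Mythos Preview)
Your proposal is correct and follows exactly the approach of the paper: the paper's own proof is the two-line remark that condition~(1) gives finiteness of $\Br(X_\eta)/f_\eta^*\Br(\eta)$ via \cite[Lemma~8.6]{HW}, after which one runs the argument of Theorem~\ref{cor:ratpointsRC} with Proposition~\ref{sc-bm} in place of Proposition~\ref{main:1}. Your write-up simply unpacks that sketch (choice of $B$, the Hilbert subset $H'$ from \cite[Proposition~4.1]{HW}, the Ekedahl step, and the verification that $(x'_v)_v$ is orthogonal to all of $\Br(X_c)$), which is precisely what the reference to Theorem~\ref{cor:ratpointsRC} is asking the reader to do.
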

\begin{proof} The condition (1) implies $\Br(X_\eta)/f_\eta^*\mkern.5mu\Br(\eta)$ is finite (see \cite[Lemma 8.6]{HW}). By Proposition \ref{sc-bm}, the proof follows from a similar argument as in Theorem \ref{cor:ratpointsRC}.
\end{proof}

\begin{remark}
	\begin{enumerate} [(1)]
		\item Suppose $k=\Bbb Q$ and all non-split fibers lie over rational points of $\Bbb P^1_\Bbb Q$. By \cite[Corollary 1.9]{GT10},  Hypothesis $({\rm H_1})$  holds (see \cite[Proposition 1.2]{hsw}), then Theorem \ref{thm:cyclic} covers the main results of \cite{bms} and \cite{hsw}. 
%		In fact, under these assumptions and condition (2), Theorem \ref{thm:cyclic} covers a special case of \cite[Theorem 1.6]{HW} which is proved by another approach.
		
		\item Let $X$ be a smooth proper model of the smooth locus of the affine variety over $k$ defined by 
		\begin{equation*}
		\prod_{i=1}^m N_{K/k}({\bf z})= c \prod_{j=1}^n P_j(t)^{e_j},
		\end{equation*}
		where all $K_i/k$ are finite field extensions and  the $P_j(t)$'s are distinct irreducible monic  polynomials. Let $L_j=k[t]/(P_j(t))$. For any $j \in \{1,\cdots,n \}$, suppose there exists some $i$ such that  $K_i L_j/L_j$ is cyclic. Suppose $[K_1:k]\mid \sum e_j[L_j:k]$. Then conditions (1), (2) and (4) in Theorem \ref{thm:cyclic} hold. Generally, the Brauer group of the generic fiber is non-trivial, hence Theorem \ref{thm:cyclic} gives many new examples different from \cite[Theorem 1.2]{ctsksd98}.
		\end{enumerate}
\end{remark}

By Theorem \ref{thm:cyclic} and Remark \ref{HB}, we immediately have the following unconditional result.
\begin{cor}
	\label{cor:smallrank3} Let $k=\mathbb Q$. 
	Let~$X$ be a smooth, proper, geometrically integral  variety over
	a number field~$\mathbb Q$
	and $f:X \to \mathbb P^1_\mathbb Q$
	be a dominant morphism
	with rationally connected geometric generic fiber.
	Assume that
	$\mathrm{rank}(f)\leq 3$. Suppose every non-split fiber $X_m$ contains a multiplicity $1$ component $D_m$ such that the algebraic closure of  $k(m)$ in the function field $k(D_m)$ is a cyclic extension of $k(m)$.

	If~$X_c(k)$ is dense in $X_c(\mathbf A_k)^{\Br(X_c)}$ for every
	rational point~$c$ of a Hilbert subset of~$\P^1_k$, then~$X(k)$ is dense in~$X(\mathbf A_k)^{\Br(X)}$.
\end{cor}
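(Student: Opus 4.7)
The plan is to reduce the result to Theorem~\ref{thm:cyclic} by verifying all four of its hypotheses. Hypothesis~(1) on the vanishing of $H^1(X_{\bar\eta},\Bbb Q/\Bbb Z)$ and $H^2(X_{\bar\eta},\mathscr O_{X_{\bar\eta}})$ is standard for rationally connected varieties (and is noted in the statement of Theorem~\ref{thm:cyclic} itself); hypothesis~(2) on cyclicity of $L_i/k_i$ is exactly our assumption on the chosen components $D_m$; and hypothesis~(4) on density in $X_c(\mathbf A_k)^{\Br(X_c)}$ for $c$ in a Hilbert subset is given. Only hypothesis~(3), namely $(\mathrm{H}_1)$ for the polynomials $P_1(t),\dots,P_n(t)$ attached to the non-split closed points of $\Bbb P^1_\Bbb Q$, requires genuine verification.

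Since $\sum_i \deg P_i = \mathrm{rank}(f)\le 3$ and $k=\Bbb Q$, there are only a handful of configurations. When $\mathrm{rank}(f)\le 2$, Corollary~\ref{cor:smallrank} already yields the conclusion unconditionally, so it suffices to restrict attention to $\mathrm{rank}(f)=3$ with all $P_i$ monic irreducible over $\Bbb Q$. The three possibilities are then: (a)~three distinct linear polynomials; (b)~one linear plus one irreducible quadratic; (c)~a single irreducible cubic. For (a), $(\mathrm{H}_1)$ is a classical consequence of Dirichlet's theorem on primes in arithmetic progressions, applied simultaneously to the linear factors after moving them by a common shift. For (b), $(\mathrm{H}_1)$ follows from Iwaniec's theorem on primes represented by an irreducible quadratic polynomial, refined by a congruence condition to accommodate the linear factor. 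For (c), $(\mathrm{H}_1)$ is precisely the content of Remark~\ref{HB}, which records the theorem of Heath--Brown and Moroz.

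With $(\mathrm{H}_1)$ available in each of the three configurations, all hypotheses of Theorem~\ref{thm:cyclic} are met and the density of $X(k)$ in $X(\mathbf A_k)^{\Br(X)}$ follows. The essential obstacle lies entirely in the analytic input for configuration~(c); this is also the source of the restriction $k=\Bbb Q$, since Heath--Brown--Moroz is only available over the rationals. The remaining steps are formal reductions to previously established results in the paper.
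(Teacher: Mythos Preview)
Your reduction to Theorem~\ref{thm:cyclic} is correct for configuration~(c), and there your argument matches the paper's. But the analytic inputs you invoke for configurations~(a) and~(b) are wrong, and for~(b) this is a genuine gap rather than a misattribution.

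For~(a), asking three pairwise non-proportional linear forms in $(\lambda,\mu)$ to be simultaneously prime up to $S$\nobreakdash-units is far beyond Dirichlet: restricting to a line in the $(\lambda,\mu)$-plane leaves three linear polynomials in one variable, and demanding these be simultaneously prime includes, for suitable forms, the twin-prime problem. The correct input is Green--Tao \cite{GT10}; the paper records exactly this in the remark following Theorem~\ref{thm:cyclic}. So your route through $(\mathrm{H}_1)$ for~(a) can be repaired with the right citation.

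For~(b), however, no theorem of Iwaniec yields $(\mathrm{H}_1)$. Restricting to the line where the linear form equals a fixed prime~$p$, the irreducible quadratic becomes a quadratic polynomial in a single variable, and producing prime values of such a polynomial is in general an open problem (Landau's $n^2+1$). I know of no result establishing $(\mathrm{H}_1)$ for an arbitrary linear-plus-quadratic pair over~$\Bbb Q$, and the paper does not claim one. The paper therefore does \emph{not} go through Theorem~\ref{thm:cyclic} here. Instead, whenever at least one non-split fiber lies over a rational point of $\Bbb P^1_\QQ$---which covers both~(a) and~(b)---it cites \cite{HW} and \cite{bs}: Browning--Schindler establish Conjecture~\ref{conjecture:1} for the associated variety~$W$ in these cases, whence Theorem~\ref{cor:ratpointsRC} applies, and the cyclicity hypothesis is not even needed. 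Only configuration~(c), the single irreducible cubic with no rational non-split point, is treated via Theorem~\ref{thm:cyclic} and Heath-Brown--Moroz, exactly as you propose.
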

\begin{proof} This theorem has been proved (see \cite{HW,bs}) when there is a non-split fiber over a rational point of $\Bbb P^1_\QQ$. In fact, if there is a non-split fiber over a rational point of $\Bbb P^1_\QQ$, this theorem  holds for all $\mathrm{rank}(f)\leq 3$ without the cyclic assumption. Suppose  all non-split fibers do not lie over a rational point of $\Bbb P^1_\QQ$, then $f$ is non-split over only one closed point $m\in \Bbb P^1_\QQ$ with $[\QQ(m):\QQ] =3$. The proof follows from Theorem \ref{thm:cyclic} and Remark \ref{HB}.
\end{proof}

\end{document}